\def \bes{\begin{eqnarray}}
\def \ees{\end{eqnarray}}
\def \bns{\begin{eqnarray*}}
	\def \ens{\end{eqnarray*}}
\newtheorem{remark}{Remark}
\newtheorem{theorem}{Theorem}[section]
\newtheorem{lemma}{Lemma}[section]
\title{Bifurcation analysis of a free boundary model of plaque formation associated with the cholesterol ratio}
\author{
 Wenrui Hao\\
Department of Mathematics\\
Pennsylvania State University\\
State College, PA 16802, USA \\
  \texttt{wxh64@psu.edu} \\
\And
Chunyue Zheng \\
Department of Mathematics\\
Pennsylvania State University\\
State College, PA 16802, USA \\
  \texttt{cmz5199@psu.edu} \\
}
\begin{document}
\maketitle

\begin{abstract}
The low-density lipoprotein (LDL)/high-density lipoprotein (HDL)-cholesterol ratio has been shown a high correlation with the cardiovascular risk assessment. Is it possible to quantify the correlation mathematically? In this paper, we develop a bifurcation analysis for a mathematical model of the plaque formation with a free boundary in the early stage of atherosclerosis. This bifurcation analysis, to the ratio of LDL/HDL, is based on the explicit formulation of radially symmetric steady-state solutions. By performing the perturbation analysis to these solutions, we establish the existence of bifurcation branches and derive a theoretical condition that a bifurcation occurs for different modes. Then we also analyze the stability of radially symmetric steady-state solutions and conduct numerical simulations to verify all the theoretical results.
\end{abstract}

	\section{Introduction}\label{sec:intro}

Atherosclerosis, known as an inflammatory disease\cite{ross1999atherosclerosis,stoll2006inflammation}, is the No.1 killer of Americans. It can affect any artery in the body and most of those deaths are from heart attacks caused by fatty deposits that clog coronary arteries. These deposits, which are called plaques, consist of cholesterol, fat, and other substances\cite{childbirthcarotid}.
As the plaque builds up, the artery wall gets thicker, which narrows the blood vessel and reduces the supply of oxygen to cells. Then the plaque may rupture and the bloodstream would carry the debris until it gets stuck, leading to the formation of thrombus. The arteries can be blocked during this process and heart attacks or strokes may occur, depending on where the plaque locates\cite{moreno2010vulnerable}.

The arterial wall usually consists of three layers: the intima, media, and adventitia (see Fig. \ref{fig:domain}). The intima is a thin single sheet of endothelial cells. The media is composed mainly of smooth muscle cells and elastic tissue. The adventitia is the outermost connective tissue layer\cite{drake2009gray, steve2003layers}.
The development of plaque begins with a lesion in the intima layer, initiating an inflammatory response resulting in the accumulation of LDLs\cite{bentzon2014mechanisms}. Part of LDLs become oxidized LDLs by free radicals and would be ingested by macrophages differentiating from monocytes. The ingestion of large amounts of oxidized LDLs transforms macrophages into foam cells that are responsible for plaque growth. In the meanwhile, the HDLs remove cholesterol from foam cells and inhibit the oxidation of LDLs\cite{barter2005role}. Therefore the balance of HDLs and LDLs is essential for plaque development. According to the cholesterol guideline of the AHA \cite{LDLHDL2,LDLHDL}, the optimal cholesterol ratio, LDL/HDL, is 3.5.
A higher ratio means a higher cardiovascular risk. In other words, individuals who have a higher ratio need to work toward the optimal ratio, either by changing their lifestyles or by eating heart-healthy diets, to reduce the cardiovascular risk. In this paper, we will interpret the importance of the LDL/HDL ratio in a mathematical modeling context. 	

Several mathematical models have been developed to explore the relationship between cholesterol ratio and cardiovascular risk \cite{calvez2009mathematical,cobbold2002lipoprotein,mckay2005towards,friedman2015mathematical}.  These models characterize biological interactions among endothelial cells, monocytes, and T cells by using partial differential equations (PDEs) and address the importance of LDL and HDL in plaque growth. Among these mathematical models, some of them are free boundary problems to describe the geometric change of the plaque in the artery \cite{HF,friedman2015mathematical,FHH}. For instance, a recent free boundary model \cite{HF} introduces a system of PDEs including LDL, HDL, macrophages, T cells, smooth muscle cells, and related cytokines and generates a ``risk-map" of plaque development for any pair values of (LDL, HDL), indicating the significance of  LDL and HDL in determining the growth or shrink of a plaque. Later, the effect of reverse cholesterol transport (RCT) has been added to this free boundary model \cite{friedman2015mathematical}. Moreover, a simplified free boundary model has been analyzed theoretically on the existence of small radially symmetric stationary plaques and their stability conditions\cite{FHH}. However, there is no theoretical analysis of the effect of cholesterol ratio on plaque growth for these free boundary models.

In this paper, we develop a free boundary model of plaque growth in the early stage of atherosclerosis and theoretically analyze the bifurcation to cholesterol ratio.
The paper is organized as follows: In Section \ref{sec:model}, we introduce a  mathematical model of plaque formation with a free boundary and derive the explicit formula of radially symmetric steady-state solutions; In Section \ref{sec:nonradially-steady}, we establish the existence of bifurcation branches from radially symmetric steady-state solutions, obtain a theoretical condition that the bifurcation occurs, and explore the linear stability of radially symmetric steady-state solutions; In Section \ref{sec:numerical}, we conduct numerical simulations of the free boundary problem and verify all the theoretical results.

\begin{figure}[htbp]
	\centering
	\includegraphics[width=0.5\linewidth]{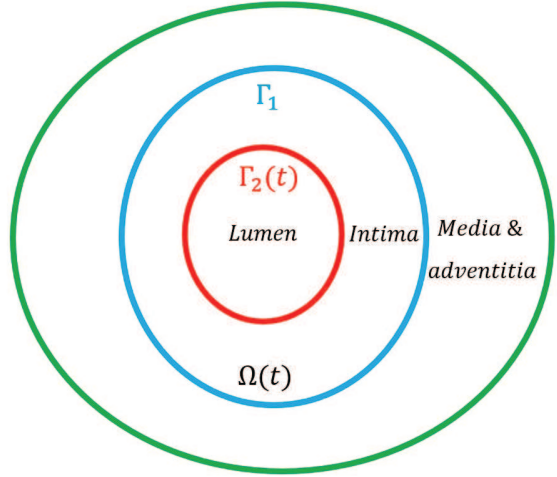}
	\caption{The domain of the free boundary model: $\Omega(t)$ represents the intima; the inner surface of the arterial wall, $\Gamma_2(t)$, is a free boundary; and the surface between the intima and media/adventitia, $\Gamma_1$, is fixed.}
	\label{fig:domain}
\end{figure}

\section{Mathematical model}\label{sec:model}
We consider plaque formation in the early stage of atherosclerosis by including the basic pathophysiology in the intima (See Fig. \ref{fig:domain} for the detailed domain setup). Macrophages enter the intima, $\Omega(t)$, by the chemotaxis of MCP-1 \cite{pmid2052604} and become foam cells by the uptake of oxidized LDL \cite{pmid32193098}. On the other hand, HDL removes the cholesterol from foam cells \cite{pmid30118702} which become M2 macrophages transfer back to the liver, referred to as the RCT process \cite{pmid31317009}. Then we model the density of macrophages, $M$, below:
\begin{eqnarray}\left\{
\begin{array}{ll}
	\displaystyle\frac{\partial M}{\partial t}-D\Delta M  = -HM , &
	x\in\Omega(t),\\
	\displaystyle  \frac{\partial M}{\partial \mathbf{n}}  = -M,  &
	x\hbox{~on~}\Gamma_1,\\
	\displaystyle  M = 1,&
	x\hbox{~on~}\Gamma_2(t),\\
\end{array}\right.\label{MF}\end{eqnarray}
where $H$ represents the concentration of HDL and $-HM$ accounts for the loss of macrophages due to the RCT process \cite{HF}. On $\Gamma_2(t)$, we use the Dirichlet boundary condition to model the recruitment of macrophages by MCP-1 and take $M=1$ after the normalization. Since there are no macrophages in media/adventitia \cite{HF}, we have $\displaystyle  \frac{\partial M}{\partial \mathbf{n}}  +\alpha M=0$ on $\Gamma_1$. For simplicity, we take the flux rate $\alpha=1$ in our model.

Plaque growth is proportional to the density of foam cells which is assumed to be a combination of LDL and macrophages in our model. Therefore we model plaque growth as
\begin{eqnarray}
\nabla\cdot\mathbf{v}  = LM - T, &\
x\in\Omega(t),
\end{eqnarray}
where $L$ represents the concentration of LDL and $T$ is the clearance capacity provided by the immune system\cite{libby2002inflammation}: if there is too much LDL and macrophages ($LM>T$), the plaque will grow; otherwise will disappear due to the immune system.  For simplicity, we treat $T$ as a parameter in our model instead of including its dynamics.

In light of the intima's high permeability to white cells and platelets \cite{CMP,CPM,GKBVBS}, we treat the intima as a porous medium and macrophages in the intima as a low-speed flow \cite{friedman2015mathematical,HF}  moving with a common velocity, $\mathbf{v}$. Thus the pressure, $P$, resulting from the movement of macrophages, follows Darcy's law $\mathbf{v}=-\nabla P$. Therefore the equation of $P$ \cite{friedman2015mathematical,HF} becomes: \begin{eqnarray}\left\{
\begin{array}{ll}
	-\displaystyle \Delta P = LM - T , &
	x\in\Omega(t),\\
	\displaystyle  \frac{\partial P}{\partial \mathbf{n}}=0& x\hbox{~on~} \Gamma_1,\\
	\displaystyle P = \gamma \kappa, ~\mathbf{v}_\mathbf{n} (t) =  - \frac{\partial P}{\partial \mathbf{n}}& x\hbox{~on~} \Gamma_2(t). \label{P} %
\end{array}\right.
\end{eqnarray}
Since the boundary $\Gamma_1$ is fixed, we have the no-flux boundary condition. On $\Gamma_2(t)$, the pressure $P$ is balanced by the surface-tension which is proportional to the mean curvature $\kappa$ ($\gamma$ is the proportionality constant, the blood pressure is considered as zero in our model for simplicity); the velocity along the normal direction $\mathbf{n}$, $\mathbf{v}_\mathbf{n}=\mathbf{v}\cdot\mathbf{n}$ gives the free boundary moving condition \cite{HF}. Thus we summarize the free boundary model as follows
\begin{eqnarray}\left\{
\begin{array}{ll}
	\displaystyle\frac{\partial M}{\partial t}-D\Delta M  = -HM , &
	x\in\Omega(t),\\
	-\displaystyle \Delta P = LM - T , &
	x\in\Omega(t),\\
	\displaystyle  \frac{\partial P}{\partial \mathbf{n}}=0,		\displaystyle  \frac{\partial M}{\partial \mathbf{n}}  = -M& x\hbox{~on~} \Gamma_1,\\
	\displaystyle  M = 1,		\displaystyle P = \gamma \kappa, ~\mathbf{v}_\mathbf{n} (t) =  - \frac{\partial P}{\partial \mathbf{n}}& x\hbox{~on~} \Gamma_2(t). \label{sys} %
\end{array}\right.
\end{eqnarray}
\subsection{Explicit formula of radially symmetric steady-sate solutions}	

 The 2D polar coordinate system: We first introduce the 2D polar coordinate with the radial coordinate $r$ and the angular coordinate $\theta$. Then $M$ and $P$ are functions of $(r,\theta,t)$ and the free boundary is represented as $r-\rho(\theta,t)=0$.
	Specifically, in the radially symmetric case, since $\frac{\partial M}{\partial \theta}=\frac{\partial P}{\partial \theta}=0$, we have $M(r,t)$  and $P(r,t)$. Moreover, the free boundary is $\Gamma_2 = \{r|r = \rho(t)\}$ and the fixed boundary is $\Gamma_1 = \{r|r = R\}$. Moreover, the steady-state solutions denote as $M(r)$ and $P(r)$ ($\rho\leq r\leq R$) since $t\rightarrow\infty$.

{\bf Radially symmetric solution of $M$:}    First we compute the radially symmetric steady-state solution of \eqref{MF} by taking $\frac{\partial M}{\partial t}=0$ and have
\begin{equation}
\label{eqn:M}
\left\{
\begin{aligned}
&M''(r)+\frac{1}{r}M'(r)-\frac{H}{D}M=0,\\
&\frac{\partial M}{\partial r}|_{r=R}  = -M(R),\\
&M(\rho) = 1.\\
\end{aligned}
\right.
\end{equation}
By taking $z=\sqrt{\frac{H}{D}}r$, we rewrite \eqref{eqn:M} in terms of $u(z)=M(r)$ as
\begin{equation*}
z^2u''(z)+zu'(z)-z^2u(z)=0,
\end{equation*}
which implies
\begin{equation}
\label{sol:M_S}
M_s(r)=u(z)=C_1I_0(z_r)+C_2K_0(z_r).
\end{equation}
Since $I_0'(z)=I_1(z)$ and $K_0'(z)=-K_1(z)$, we solve for $C_1$ and $C_2$ by using the boundary conditions:
\begin{equation}
\left\{
\begin{aligned}
&C_1\sqrt{\frac{H}{D}}I_1(z_R)-C_2\sqrt{\frac{H}{D}}K_1(z_R)=-(C_1I_0(z_R)+C_2K_0(z_R)),\\
&C_1I_0(z_\rho)+C_2K_0(z_\rho) = 1.
\end{aligned}
\right.
\end{equation}
Then we have
\begin{equation*}
\begin{aligned}
C_1=\frac{\sqrt{\frac{H}{D}}K_1(z_R)-K_0(z_R)}{C(\rho,R,H,D)}\hbox{~and~} C_2 =  \frac{\sqrt{\frac{H}{D}}I_1(z_R)+I_0(z_R)}{C(\rho,R,H,D)},
\end{aligned}
\end{equation*}
where
\begin{equation*}
\begin{aligned}
C(\rho,R,H,D)=&(I_0(z_R)K_0(z_\rho)-I_0(z_\rho)K_0(z_R))\\
&+\sqrt{\frac{H}{D}}(K_1(z_R)I_0(z_\rho)+I_1(z_R)K_0(z_\rho))
\end{aligned}
\end{equation*}
{\bf Remark:} By the maximum principle, we have $M_s(r)\ge0$ for $\rho\le r\le R$,  $M'_s(\rho) < 0$, and $M'_s(R) < 0$.

{\bf Radially symmetric solution of $P$:}	By rewriting \eqref{P} as $ \Delta (P +\frac{DL}{H}M)=  T $, we have
\begin{equation}
\label{sol:P_S}
P_s(r)=-\frac{DL}{H}M_s(r)+C_3\ln r+C_4 + \frac{1}{4}Tr^2.
\end{equation}
The boundary conditions in radially symmetric case become
\begin{equation}
P_s(\rho)=\frac{\gamma}{\rho},\quad \frac{\partial P_s}{\partial r}|_{r=R}=0, \hbox{~and~}\frac{\partial P_s}{\partial r}|_{r=\rho}=0,
\end{equation}
which are used to determine $C_3$, $C_4$, and $T$, namely,
\begin{equation*}
C_3=\frac{DL}{H}\frac{R^2\rho^2}{R^2-\rho^2}(\frac{M'_s(\rho)}{\rho}+\frac{M_s(R)}{R}),
\end{equation*}
\begin{equation*}
C_4=\frac{\gamma}{\rho}+\frac{DL}{H}-C_3\ln \rho-\frac{1}{4}T\rho^2,
\end{equation*}
and
\begin{equation}
T = -\frac{DL}{H}\frac{2}{R^2-\rho^2}(RM_s(R)+\rho M'_s(\rho)).
\label{eqn:T}
\end{equation}

For any given $T$, we compute $\rho$ by solving (\ref{eqn:T}). Therefore, the existence of $\rho$ is critical for our model. In order to prove the existence, we solve $T$ for any given $\rho$ and have the following theorem.

\begin{theorem}
	For any given $L>0$ and $\rho > 0$, there exists a unique $T>0$ such that a stationary solution $(M_s,P_s)$ is given by \eqref{sol:M_S} and \eqref{sol:P_S}.
\end{theorem}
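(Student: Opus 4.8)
The plan is to observe that, once the annulus $0<\rho<R$ is fixed, the steady density $M_s$ in \eqref{sol:M_S} is uniquely determined by $\rho$ and does \emph{not} depend on $T$; so the statement reduces to showing that the compatibility identity \eqref{eqn:T} assigns exactly one value to $T$ and that this value is positive. The constants $C_3,C_4$, and hence the pressure $P_s$ in \eqref{sol:P_S}, then follow directly from their explicit expressions.

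First I would verify that $M_s$ is well defined and unique: the $2\times2$ linear system for $(C_1,C_2)$ coming from the boundary conditions in \eqref{eqn:M} has determinant $C(\rho,R,H,D)$, which is strictly positive since $I_0$ is increasing, $K_0$ is decreasing, $I_0,I_1,K_0,K_1>0$, and $z_\rho<z_R$; hence $(C_1,C_2)$, and therefore $M_s$, exists and is unique. By the maximum principle recorded in the Remark, $M_s\ge 0$ on $[\rho,R]$, and $M_s\not\equiv 0$ because $M_s(\rho)=1$, so in particular $\int_\rho^R rM_s(r)\,dr>0$.

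The crux is to determine the sign of the quantity $RM_s(R)+\rho M_s'(\rho)$ that appears in \eqref{eqn:T}. I would rewrite the ODE in \eqref{eqn:M} in divergence form, $\bigl(rM_s'(r)\bigr)'=\tfrac{H}{D}\,rM_s(r)$, integrate over $[\rho,R]$, and substitute the Robin condition $M_s'(R)=-M_s(R)$; this gives
\[
RM_s(R)+\rho M_s'(\rho)=-\frac{H}{D}\int_\rho^R rM_s(r)\,dr<0 .
\]
Feeding this into \eqref{eqn:T} collapses the formula for $T$ to $T=\dfrac{2L}{R^2-\rho^2}\displaystyle\int_\rho^R rM_s(r)\,dr$, which is strictly positive for every $L>0$ and $0<\rho<R$. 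Uniqueness is immediate, since $M_s$ is unique and the right-hand side of \eqref{eqn:T} is then a single number; this $T$ determines $C_3$ and $C_4$ through their stated formulas, so $P_s$ is the unique stationary pressure, and one checks directly (using $\kappa=1/\rho$ on the circle $r=\rho$) that the pair $(M_s,P_s)$ solves the steady-state version of \eqref{sys}.

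The only step that is more than bookkeeping is the sign analysis of $RM_s(R)+\rho M_s'(\rho)$; everything there rests on combining the divergence form of the $M$-equation with the Robin boundary condition and the nonnegativity of $M_s$ from the maximum principle. The uniqueness of $T$ and the verification that $(M_s,P_s)$ solves the stationary system are then routine.
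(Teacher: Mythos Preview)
Your argument is correct and follows essentially the same route as the paper: both rely on the identity $(rM_s')'=\tfrac{H}{D}\,rM_s\ge 0$, the paper phrasing this as monotonicity of $f(r)=rM_s'(r)$ while you integrate it over $[\rho,R]$ to obtain the same sign conclusion. Your version is a bit more thorough, since you also verify the nonsingularity of the linear system determining $(C_1,C_2)$ and obtain the strict inequality $T>0$ via $\int_\rho^R rM_s>0$, whereas the paper's monotonicity step only yields $T\ge 0$.
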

\begin{proof}
	For any given $\rho$, it is obvious that $T$ is uniquely determined by \eqref{eqn:T}. Next we prove $T>0$ by letting
	$$
	f(r) = rM'_s(r).
	$$
	Since		$$
	f'(r) = rM''_s(r) + M'_s(r) = \frac{H}{D}rM_s(r) \ge 0,
	$$
	we have
	\begin{equation*}
	f(\rho) = \rho M'_s(\rho) \le f(R) = R M'_s(R) = -RM_s(R)
	\end{equation*}
	which implies
	$$
	T= -\frac{DL}{H}\frac{2}{R^2-\rho^2}(f(\rho) - f(R)) \ge 0.
	$$
	
\end{proof}

\section{Bifurcation analysis and linear stability}\label{sec:nonradially-steady}
\subsection{The linearized system}
\label{subsec:linearized}
First, we derive the linearized system of (\ref{sys}) 	
with a perturbed domain $\Omega_\varepsilon$ to $\Omega$, namely, $\Gamma_{\varepsilon}=\{ r|r = \rho +\varepsilon\rho_1(\theta)\}$:
\begin{eqnarray}\left\{
\begin{array}{ll}
	\displaystyle-D\Delta M  = -HM , &
	x\in\Omega_\varepsilon,\\
	\displaystyle  \frac{\partial M}{\partial \mathbf{n}}  = -M,  &
	x\hbox{~on~}\Gamma_1,\\
	\displaystyle  M = 1,&
	x\hbox{~on~}\Gamma_{\varepsilon},\\
	-\displaystyle \Delta P = LM - T , &
	x\in\Omega_\varepsilon,\\
	\displaystyle  \frac{\partial P}{\partial \mathbf{n}}=0& x\hbox{~on~} \Gamma_1,\\
	\displaystyle P = \gamma \kappa,& x\hbox{~on~} \Gamma_{\varepsilon}.
\end{array}\right.\label{eqn:perturbed}\end{eqnarray}

By defining the following nonlinear function $F$ based on the free boundary condition,
\begin{eqnarray}
F(\rho_1,L)=\frac{\partial P}{\partial r}\vline_{\Gamma_\varepsilon}\label{eqn:nonlinear},
\end{eqnarray}
we conclude that $\rho_1(\theta)$ induces a stationary solution if and only if $F(\rho_1,L)=0$. Then we consider the solution of (\ref{eqn:perturbed}), $(M,P)$,
up to the 2nd order of $\varepsilon$:
\begin{equation}
\begin{aligned}
&M(r,\theta) = M_s(r) +\varepsilon M_1(r,\theta) +\mathcal{O}(\varepsilon^2),\\
&P(r,\theta) = P_s(r) +\varepsilon P_1(r,\theta) +\mathcal{O}(\varepsilon^2).\\
\end{aligned}
\label{eqn:expansion}
\end{equation}
Thus the boundary condition of $M$ on $\Gamma_\varepsilon$ becomes
\begin{equation*}
\begin{aligned}
1 &= M(r,\theta)|_{\Gamma_{\varepsilon}} = M(\rho +\varepsilon\rho_1,\theta) \\
&= M_s(\rho +\varepsilon\rho_1 +\varepsilon M_1(\rho +\varepsilon\rho_1,\theta) +\mathcal{O}(\varepsilon^2) \\
&= M_s(\rho) + \varepsilon\rho_1\frac{\partial M_s}{\partial r}(\rho) + \varepsilon M_1(\rho) +\mathcal{O}(\varepsilon^2).
\end{aligned}
\end{equation*}

Since the mean curvature is given by
\begin{equation}
\kappa =\frac{2r_\theta^2-r r_{\theta\theta}+r^2}{(r_\theta^2+r^2)^{3/2}},
\end{equation}
the linearization of $\kappa$ becomes
\begin{equation}
\begin{aligned}
\label{eqn:curvature}
\kappa|_{\Gamma_\varepsilon} =& \frac{(\rho+\varepsilon\rho_1)^2+2(\rho_{\theta} +\varepsilon\rho_{1\theta})^2}{((\rho +\varepsilon\rho_1)^2+(\rho_{\theta} +\varepsilon\rho_{1\theta})^2)^{3/2}} \\
& -\frac{(\rho +\varepsilon\rho_1)(\rho_{\theta\theta} +
	\varepsilon\rho_{1\theta\theta})}{((\rho +\varepsilon\rho_1)^2+(\rho_{\theta} +\varepsilon\rho_{1\theta})^2)^{3/2}}\\
=& \kappa_0 + \varepsilon\kappa_1 +  \mathcal{O}(\varepsilon^2),
\end{aligned}
\end{equation}
where
\begin{equation}
\kappa_0 = \frac{2\rho_{ \theta}^2-\rho  \rho_{\theta\theta}+\rho ^2}{(\rho_{ \theta}^2+\rho ^2)^{3/2}}
\end{equation}
and
\begin{equation}
\begin{aligned}
\kappa_1
=& (\frac{2\rho-\rho_{\theta \theta}}{(\rho_{\theta}^2+\rho ^2)^{3/2}}-\frac{3}{2} \frac{(\rho^2+2\rho_{ \theta}^2-\rho\rho_{\theta \theta})2\rho}{(\rho_{ \theta}^2+\rho^2)^{5/2}})\rho_1\\
&+(\frac{4\rho_{ \theta}}{(\rho_{ \theta}^2+\rho^2)^{3/2}}-\frac{3}{2} \frac{(\rho^2+2\rho_{ \theta}^2-\rho\rho_{\theta \theta})2\rho_{\theta}}{(\rho_{ \theta}^2+\rho^2)^{5/2}})\rho_{1 \theta}\\
&-\frac{\rho}{(\rho_{ \theta}^2+\rho ^2)^{3/2}}\rho_{1\theta\theta}.
\end{aligned}
\end{equation}
After dropping the higher order terms, we obtain the linearized system below:
\begin{equation}
\label{eqn:linearized}
\left\{
\begin{aligned}
&\Delta M_1 = \frac{H}{D}M_1\quad\mathrm{in}\quad \Omega,\\
&M_1(\rho) = -\rho_1\frac{\partial M_s(\rho) }{\partial r},\\
&\frac{\partial M_1}{\partial r}(R) = -M_1(R),\\
&\Delta P_1 = -LM_1\quad\mathrm{in}\quad \Omega,\\
&P_1(\rho) = -\frac{\gamma}{\rho^2}(\rho_1 + \rho_{1\theta\theta}),\\
&\frac{\partial P_1}{\partial r}(R)  = 0.
\end{aligned}
\right.
\end{equation}

Assuming $\rho_1(\theta) = \cos(n\theta)$ and, by separation of variables,
\begin{equation}
M_1(r,\theta)=\cos(n\theta)Q_n(r),
\end{equation}
we have
\begin{equation}\label{eqn:Qn}
Q_n(r)=q_n(z) = \tilde C_1 I_n(z_r) + \tilde C_2 K_n(z_r)
\end{equation}
which satisfies $z^2q_n''+zq_n'-(z^2+n^2)q_n=0$, $\frac{\partial Q_n}{\partial r}(R)=-Q_n(R)$, and
$
Q_n(\rho) = -\frac{\partial M_s(\rho) }{\partial r}.
$
Since $I'_n(x)=\frac{n}{x}I_n(x)+I_{n+1}(x)$ and $K'_n(x)=\frac{n}{x}K_n(x)-K_{n+1}(x)$, we have
\begin{equation*}
\begin{aligned}
Q'_n(r)=&\tilde C_1\frac{\partial I_n(z_r)}{\partial r}+\tilde C_2\frac{\partial K_n(z_r)}{\partial r}\\
=&\tilde C_1(\frac{n}{r}I_n(z_r)+\sqrt{\frac{H}{D}}I_{n+1}(z_r))\\
&+\tilde C_2(\frac{n}{r}K_n(z_r)-\sqrt{\frac{H}{D}}K_{n+1}(z_r)),
\end{aligned}
\end{equation*}
where	\begin{equation*}
\left\{
\begin{aligned}
&\tilde C_1 =\frac{-M'_s(\rho)}{I_n(z_\rho)+KK_n(z_\rho)} \\
&\tilde C_2 = \frac{-KM'_s(\rho)}{I_n(z_\rho)+KK_n(z_\rho)}\\
\end{aligned}
\right.
\end{equation*}
and $$
K = -\frac{I_n(z_R)+I'_n(z_R)}{K_n(z_R)+K'_n(z_R)}.
$$
By the maximum principle,  we have $Q_n(r)\ge0$.

Similarly,	we have $P_1(\rho)=\gamma(-\frac{1}{\rho^2}+\frac{n^2}{\rho^2})\cos(n\theta)$ and
$
\frac{\partial P_1}{\partial r}(R)  = 0.
$
Therefore,	we obtain \begin{equation}
P_1 + \frac{LD}{H}M_1 = \tilde C_3 r^n\cos(n\theta) +\tilde C_4 r^{-n}\cos(n\theta),
\end{equation}
where $\tilde C_3 $ and $\tilde C_4 $ satisfy
\begin{equation}
\left\{
\begin{aligned}
&-\frac{LD}{H}\frac{\partial Q_n}{\partial r}(R) + \tilde C_3 nR^{n-1}-\tilde C_4 nR^{-n-1}= 0,\\
&-\frac{LD}{H}Q_n(\rho) + \tilde C_3 \rho^n+\tilde C_4 \rho^{-n} = \gamma(-\frac{1}{\rho^2}+\frac{n^2}{\rho^2}),
\end{aligned}
\right.
\end{equation}
or
\begin{equation}
\left\{
\begin{aligned}
&\tilde C_3 = \frac{\gamma(n^3-n)\rho^{-2+n}-L\frac{D}{H}(Q_n(R)R^{n+1}-nQ_n(\rho)\rho^{n})}{n(\rho^{2n}+R^{2n})},\\
&\tilde C_4 = \tilde C_3 R^{2n}+L\frac{D}{H}Q_n(R)\frac{R^{n+1}}{n}.
\end{aligned}\label{eqn:C4}
\right.
\end{equation}

\subsection{Justification for \eqref{eqn:expansion}}	In this subsection, we justify the validity of expansions in \eqref{eqn:expansion} by showing that the $\mathcal{O}(\epsilon^2)$ terms are small.	First we introduce the following Banach space
\begin{equation*}
\begin{aligned}
X^{l+\alpha} =\  &\{\rho_1\in C^{l+\alpha}: \rho_{1} \mathrm{\ is}\ 2\pi\mbox{-}\mathrm{periodic} \},\\
X^{l+\alpha}_1 =\  &\mathrm{closure\ of\ the\ linear\ space\ spanned}\\
&\mathrm{by\ }\{\cos(j\theta),j=0,1,2,\cdots\}\ \mathrm{in}\ X^{l+\alpha}.
\end{aligned}
\end{equation*}
Then we have the following lemma:	
\begin{lemma}
	If $\rho_{1}\in C^{3+\alpha}(\mathbb{R})$ and $(M,P)$ is the solution of \eqref{eqn:perturbed}, then
	\begin{equation}
	\|M - M_s \|_{C^{3+\alpha}(\bar{\Omega}_\varepsilon)} \le C|\varepsilon|\|\rho_{1}\|_{C^{3+\alpha}(\mathbb{R})},
	\end{equation}
	\begin{equation}
	\|P - P_s \|_{C^{1+\alpha}(\bar{\Omega}_\varepsilon)} \le C|\varepsilon|\|\rho_{1}\|_{C^{3+\alpha}(\mathbb{R})},
	\end{equation}
	where constant $C$ is independent of $\varepsilon$.
\end{lemma}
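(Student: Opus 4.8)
The plan is to estimate the remainder terms $M-M_s$ and $P-P_s$ by transferring the perturbed problem \eqref{eqn:perturbed} back to the fixed reference domain $\Omega$ via a near-identity diffeomorphism, and then applying Schauder theory to the resulting elliptic problems whose data are $\mathcal{O}(\varepsilon)$. First I would construct an explicit change of variables $\Phi_\varepsilon:\bar\Omega\to\bar\Omega_\varepsilon$ that is the identity near $\Gamma_1$ and maps $r=\rho$ to $r=\rho+\varepsilon\rho_1(\theta)$, e.g. $\Phi_\varepsilon(r,\theta)=(r+\varepsilon\chi(r)\rho_1(\theta),\theta)$ with a smooth cutoff $\chi$ supported near $\rho$; since $\rho_1\in C^{3+\alpha}$, $\Phi_\varepsilon$ is a $C^{3+\alpha}$ diffeomorphism with $\|\Phi_\varepsilon-\mathrm{id}\|_{C^{3+\alpha}}\le C|\varepsilon|\|\rho_1\|_{C^{3+\alpha}}$. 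Pulling back, $\widetilde M:=M\circ\Phi_\varepsilon$ solves an elliptic equation $\mathcal{L}_\varepsilon\widetilde M=0$ in $\Omega$ where $\mathcal{L}_\varepsilon$ is a second-order operator with $C^{1+\alpha}$ coefficients that converge to $D\Delta-H$ at rate $O(\varepsilon)$, together with $\widetilde M=1$ on $r=\rho$ and the unchanged Robin condition on $\Gamma_1$.

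Next I would set $N:=\widetilde M-M_s$, which satisfies $\mathcal{L}_\varepsilon N=(D\Delta-H)M_s-\mathcal{L}_\varepsilon M_s=:g_\varepsilon$ in $\Omega$, with $N=0$ on $r=\rho$ and $\partial_r N+N=0$ on $\Gamma_1$. Since $M_s$ is smooth (indeed analytic in $r$) and $\mathcal{L}_\varepsilon-(D\Delta-H)$ has coefficients of size $O(\varepsilon)\|\rho_1\|_{C^{3+\alpha}}$, we get $\|g_\varepsilon\|_{C^{1+\alpha}(\bar\Omega)}\le C|\varepsilon|\|\rho_1\|_{C^{3+\alpha}}$. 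The interior/boundary Schauder estimate for the mixed Dirichlet–Robin problem for the uniformly elliptic operator $\mathcal{L}_\varepsilon$ (uniform ellipticity and uniform $C^{1+\alpha}$ bounds on coefficients hold for $|\varepsilon|$ small) gives $\|N\|_{C^{3+\alpha}(\bar\Omega)}\le C(\|g_\varepsilon\|_{C^{1+\alpha}(\bar\Omega)}+\|0\|_{C^{3+\alpha}}+\|0\|_{C^{2+\alpha}})\le C|\varepsilon|\|\rho_1\|_{C^{3+\alpha}}$, where the key point is that $0$ is not an eigenvalue of the homogeneous problem so the estimate has no additive $\|N\|_{L^\infty}$ term — this follows from the maximum principle since $H/D>0$. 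Pushing forward through $\Phi_\varepsilon^{-1}$ (which costs only a constant factor in the $C^{3+\alpha}$ norm) yields the first stated bound for $M-M_s$.

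For the pressure, I would repeat the pull-back: $\widetilde P:=P\circ\Phi_\varepsilon$ solves $\widetilde{\mathcal{L}}_\varepsilon\widetilde P=-L\widetilde M+T$ (again a perturbation of $-\Delta$), with Neumann data on $\Gamma_1$ and Dirichlet data $\widetilde P=\gamma\kappa|_{\Gamma_\varepsilon}$ on $r=\rho$. Writing $W:=\widetilde P-P_s$, the equation for $W$ has right-hand side $-L(\widetilde M-M_s)+(\widetilde{\mathcal{L}}_\varepsilon+\Delta)P_s$, which is $O(\varepsilon)\|\rho_1\|_{C^{3+\alpha}}$ in $C^{\alpha}$ (using the $M$-estimate just proved for the first piece and smoothness of $P_s$ for the second), the Neumann datum on $\Gamma_1$ vanishes, and the Dirichlet datum is $\gamma(\kappa|_{\Gamma_\varepsilon}-\kappa_0)$, which from the expansion \eqref{eqn:curvature} is $O(\varepsilon)\|\rho_1\|_{C^{2}}$ in $C^{1+\alpha}$ — here I only claim a $C^{1+\alpha}$ bound for $P$ because the curvature datum involves $\rho_{1\theta\theta}$, costing two derivatives of $\rho_1$, so with $\rho_1\in C^{3+\alpha}$ the boundary data lies in $C^{1+\alpha}$ and no better. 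Applying the Schauder estimate for the mixed Neumann–Dirichlet problem (with the pressure normalized, e.g. by fixing its value at one point, to kill the constant in the pure-Neumann-on-$\Gamma_1$ direction — but here the Dirichlet condition on $\Gamma_2$ already pins it down) gives $\|W\|_{C^{1+\alpha}(\bar\Omega)}\le C|\varepsilon|\|\rho_1\|_{C^{3+\alpha}}$, and pushing forward finishes the proof.

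The main obstacle is bookkeeping the transfer between $\Omega_\varepsilon$ and $\Omega$ cleanly: one must verify that the pulled-back operators are genuinely uniformly elliptic with uniformly bounded Hölder coefficients (so that the Schauder constant $C$ can be chosen independent of $\varepsilon$), and—more delicately—that the relevant homogeneous boundary value problems are invertible with $\varepsilon$-uniform bounds, which for $M$ is immediate from $H,D>0$ and the sign of the Robin condition but for $P$ requires care because a pure Neumann problem on $\Gamma_1$ alone would have a kernel; it is the Dirichlet condition on the free boundary that restores uniqueness, and one should confirm the constant in that combined estimate does not blow up as $\rho$ or $R$ vary in the fixed compact regime. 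A secondary nuisance is simply checking that $M_s$ and $P_s$, though written via Bessel functions and logarithms, are smooth up to both boundaries on the fixed annulus $\rho\le r\le R$ (they are, since $\rho>0$), so that the commutator terms $(\mathcal{L}_\varepsilon-(D\Delta-H))M_s$ and $(\widetilde{\mathcal{L}}_\varepsilon+\Delta)P_s$ are controlled in the required Hölder norms.
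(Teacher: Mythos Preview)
Your proposal is correct but follows a genuinely different route from the paper's proof. The paper does \emph{not} pull back to the reference annulus; instead it works directly on $\Omega_\varepsilon$ and exploits the fact that $M_s$ and $P_s$, being explicit combinations of Bessel functions, logarithms and polynomials, are defined and satisfy their respective equations for all $r>0$, not merely on $\rho\le r\le R$. Consequently $M-M_s$ satisfies the \emph{unperturbed} equation $\Delta(M-M_s)-\tfrac{H}{D}(M-M_s)=0$ exactly on $\Omega_\varepsilon$, with homogeneous Robin data on $\Gamma_1$ and Dirichlet data $g_1=M_s(\rho)-M_s(\rho+\varepsilon\rho_1)$ on $\Gamma_\varepsilon$; the $O(\varepsilon)$ smallness sits entirely in the boundary datum, and a single Schauder estimate (with the observation that $\Gamma_\varepsilon\in C^{3+\alpha}$ uniformly) finishes the job. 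The pressure is handled the same way.

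Your approach instead transfers the problem to the fixed annulus via a Hanzawa-type diffeomorphism, so the $O(\varepsilon)$ smallness appears in the \emph{source term} (the commutator $(\mathcal{L}_\varepsilon-(D\Delta-H))M_s$) rather than in the boundary data. This is exactly the machinery the paper brings in for the \emph{next} result (the $O(\varepsilon^2)$ estimate in Theorem~3.1), where subtracting the first-order corrector forces one to compare operators. For the present first-order lemma the paper's direct argument is shorter and avoids tracking the perturbed coefficients; your argument is more portable (it would survive if $M_s$ did not extend smoothly past $r=\rho$) at the cost of the bookkeeping you correctly flag as the main obstacle.
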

\begin{proof}
	First we derive the equation of $M-M_s$ below
	\begin{equation}\left\{
	\begin{aligned}
	&\Delta (M-M_s)- \frac{H}{D} (M-M_s) =  0 \quad\mathrm{in}\quad \Omega_\varepsilon,\\
	&\frac{\partial(M-M_s)}{\partial n} + (M-M_s) = 0\quad\mathrm{on}\quad \Gamma_1,\\
	&M-M_s = g_1 \quad\mathrm{on}\quad \Gamma_\varepsilon.\\
	\end{aligned}
	\right.
	\end{equation}
	From \eqref{eqn:perturbed} and $M_s(\rho) = 1$, we have
	\begin{equation}
	\begin{aligned}
	g_1 &= M(\rho + \varepsilon\rho_{1}) - M_s(\rho + \varepsilon\rho_{1}) \\
	&= 1 - M_s(\rho + \varepsilon\rho_{1}) \\
	&= M_s(\rho) - M_s(\rho + \varepsilon\rho_{1}).
	\end{aligned}
	\end{equation}
	By differentiating three times with respect to $\theta$, we obtain		\begin{equation}
	\|M-M_s\|_{C^{3+\alpha}(\Gamma_\varepsilon)} \le C|\varepsilon|\|\rho_{1}\|_{C^{3+\alpha}(\mathbb{R})}.
	\end{equation}
	The Schauder estimates then indicate that
	$$
	\|M - M_s \|_{C^{3+\alpha}(\bar{\Omega}_\varepsilon)} \le C|\varepsilon|\|\rho_{1}\|_{C^{3+\alpha}(\mathbb{R})}.
	$$
	Since $\Gamma_\varepsilon\in C^{3+\alpha}$, the constant $C$ is independent of $\varepsilon$.
	
	Similarly, the equation of $P - P_s$ reads as
	\begin{equation*}\left\{
	\begin{aligned}
	&-\Delta (P-P_s)=  L(M-M_s)\quad\mathrm{in}\quad \Omega_\varepsilon,\\
	&P-P_s = g_2 \quad\mathrm{on}\quad \Gamma_\varepsilon,\\
	&\frac{\partial(P-P_s)}{\partial n}= 0 \quad\mathrm{on}\quad \Gamma_1,\\
	\end{aligned}
	\right.
	\end{equation*}
	where
	\begin{equation*}
	\begin{aligned}
	g_2 &= P(\rho + \varepsilon\rho_{1}) - P_s(\rho + \varepsilon\rho_{1}) = \gamma\kappa|_{r = \rho+\varepsilon\rho_1}- P_s(\rho + \varepsilon\rho_{1}) \\
	&= \frac{\gamma}{\rho} - \gamma\frac{\varepsilon}{\rho^2}(\rho_1 + \rho_{1\theta\theta}) + \mathcal{O}(\varepsilon^2)- P_s(\rho + \varepsilon\rho_{1}) \\
	&= P_s(\rho)- P_s(\rho + \varepsilon\rho_{1})- \gamma\frac{\varepsilon}{\rho^2}(\rho_1 + \rho_{1\theta\theta}) + \mathcal{O}(\varepsilon^2).
	\end{aligned}
	\end{equation*}
	
	We differentiate the above equation along $\Gamma_{\varepsilon}$ and get
	$$
	\|P-P_s\|_{C^{1+\alpha}(\Gamma_\varepsilon)} \le C|\varepsilon|\|\rho_{1}\|_{C^{3+\alpha}(\mathbb{R})}.
	$$
	The Schauder estimates imply
	\begin{equation*}
	\begin{aligned}
	\|P-P_s\|_{C^{1+\alpha}(\bar{\Omega}_\varepsilon)} &\le C\|M-M_s \|_{C^{\alpha}(\bar{\Omega}_\varepsilon)}  + C\|P-P_s\|_{C^{1+\alpha}(\Gamma_\varepsilon)}\\
	&\le C|\varepsilon|\|\rho_{1}\|_{C^{3+\alpha}(\mathbb{R})}.
	\end{aligned}
	\end{equation*}
	Due to the regularity of $M_s$ and $P_s$ and $\Gamma_\varepsilon\in C^{3+\alpha}$, we conclude the constant $C$ is independent of $\varepsilon$.
	
\end{proof}
Next, we proceed to rigorously establish \eqref{eqn:expansion}. Since both $M$ and $P$ are defined on $\Omega_\varepsilon$ but $M_1$ and $P_1$ are  defined on $\Omega$ only, we transform $M_1$ and $P_1$ to $\Omega_\varepsilon$ by Hanzawa transformation $H_\varepsilon$ \cite{hanzawa1981classical}:
$$
(r,\theta) = H_\varepsilon(r',\theta') = (r'+ \chi(r'-\rho)\varepsilon\rho_1,\theta')
$$
where
\begin{equation*}
\chi\in C^{\infty},\quad \chi(z) = \left\{
\begin{aligned}
&0\ \mathrm{if}\ |z| \ge \frac{3}{4}\delta_0\\
&1\ \mathrm{if}\ |z| < \frac{1}{4}\delta_0\\
\end{aligned}
\right. , \quad |\frac{d^k \chi}{dz^k}|\le\frac{C}{\delta^k_0}
\end{equation*}
and $\delta_0>0$ is small. Noticing that $H_\varepsilon$ maps $\Omega$ onto $\Omega_\varepsilon$ but keeps the annulus $\{r: \rho + \frac{3}{4}\delta_0 \le r \le R\}$ fixed, we set
\begin{equation}
\tilde{M}_1(r,\theta) = M_1(H^{-1}_\varepsilon(r,\theta)),\quad\tilde{P}_1(r,\theta) = P_1(H^{-1}_\varepsilon(r,\theta)).
\label{eqn:hanzawa}
\end{equation}
Then we establish the following estimates.
\begin{theorem}
	If $\rho_{1}\in C^{3+\alpha}(\mathbb{R})$, $(M,P)$ is the solution of \eqref{eqn:perturbed}, and $(\tilde{M}_1,\tilde{P}_1)$ is defined as \eqref{eqn:hanzawa}, then
	\begin{equation}
	\begin{aligned}
	&\|M - M_s - \varepsilon \tilde{M}_1 \|_{C^{3+\alpha}(\bar{\Omega}_\varepsilon)}\le C|\varepsilon|^2\|\rho_1\|_{C^{3+\alpha}(\mathbb{R})}\\
	&\|P - P_s - \varepsilon \tilde{P}_1 \|_{C^{1+\alpha}(\bar{\Omega}_\varepsilon)}\le C|\varepsilon|^2\|\rho_1\|_{C^{3+\alpha}(\mathbb{R})}
	\end{aligned}
	\end{equation}
\end{theorem}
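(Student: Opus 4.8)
The plan is to set $N_2 := M - M_s - \varepsilon\tilde{M}_1$ and $S_2 := P - P_s - \varepsilon\tilde{P}_1$ on $\Omega_\varepsilon$, write down the boundary value problems they satisfy, and apply exactly the interior--boundary Schauder estimates already used in the preceding Lemma. The only genuinely new work is to check that every inhomogeneous term --- the interior forcing and the data on $\Gamma_\varepsilon$ --- is of size $\mathcal{O}(\varepsilon^2)$ in the relevant norm; the cancellation of the $\mathcal{O}(\varepsilon)$ contributions is forced by the very definition of $(M_1,P_1)$ through \eqref{eqn:linearized}. Throughout, $C$ is allowed to depend on the data and on an a priori bound for $\|\rho_1\|_{C^{3+\alpha}(\mathbb{R})}$, but not on $\varepsilon$.

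First I would record the interior equations. Since $M$ solves $\Delta M = \tfrac{H}{D}M$ on $\Omega_\varepsilon$ and the explicit radial profile $M_s$ satisfies the same ODE pointwise on all of $\{r>0\}$, one has $\Delta(M-M_s) - \tfrac{H}{D}(M-M_s) = 0$ on $\Omega_\varepsilon$ exactly. For $\tilde{M}_1 = M_1\circ H_\varepsilon^{-1}$ I would push $\Delta$ through the Hanzawa change of variables $(r,\theta) = (r' + \chi(r'-\rho)\varepsilon\rho_1(\theta),\theta)$: because this map is affine in $\varepsilon$ and equals the identity outside the collar $\{|r'-\rho| < \tfrac34\delta_0\}$, one obtains $\Delta\tilde{M}_1 - \tfrac{H}{D}\tilde{M}_1 = \varepsilon f_M$ in $\Omega_\varepsilon$, where $f_M$ is a second-order expression in $M_1$ with coefficients assembled from $\chi,\chi',\chi'',\rho_1,\rho_{1\theta}$; Schauder applied to the $M_1$-problem gives $\|M_1\|_{C^{3+\alpha}(\bar{\Omega})}\le C\|\rho_1\|_{C^{3+\alpha}}$, hence $\|f_M\|_{C^{1+\alpha}(\bar{\Omega}_\varepsilon)}\le C\|\rho_1\|_{C^{3+\alpha}}$, and therefore $N_2$ solves $\Delta N_2 - \tfrac{H}{D}N_2 = -\varepsilon^2 f_M$ on $\Omega_\varepsilon$. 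On $\Gamma_1$ (which lies outside the collar once $\delta_0 < \tfrac43(R-\rho)$) $H_\varepsilon$ is the identity, so $\tilde{M}_1 = M_1$ there and $M$, $M_s$, $M_1$ all obey $\partial_r(\cdot) = -(\cdot)$; hence $N_2$ satisfies the homogeneous Robin condition on $\Gamma_1$ exactly. The analogous statements for $P$ read $-\Delta S_2 = L\,N_2 - \varepsilon^2 f_P$ in $\Omega_\varepsilon$ with $\|f_P\|_{C^{\alpha}(\bar{\Omega}_\varepsilon)}\le C\|\rho_1\|_{C^{3+\alpha}}$, and $\partial_n S_2 = 0$ on $\Gamma_1$.

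The crux is the data on $\Gamma_\varepsilon$. Since $\chi(0)=1$, the point $(\rho+\varepsilon\rho_1(\theta),\theta)\in\Gamma_\varepsilon$ is the $H_\varepsilon$-image of $(\rho,\theta)\in\Gamma$, so $\tilde{M}_1|_{\Gamma_\varepsilon} = M_1(\rho,\theta) = -\rho_1 M_s'(\rho)$ by \eqref{eqn:linearized}; Taylor-expanding $M_s$ and using $M_s(\rho)=1$ gives $N_2|_{\Gamma_\varepsilon} = 1 - M_s(\rho+\varepsilon\rho_1) + \varepsilon\rho_1 M_s'(\rho) = -\tfrac12\varepsilon^2\rho_1^2 M_s''(\rho) + \mathcal{O}(\varepsilon^3)$, whose $C^{3+\alpha}(\Gamma_\varepsilon)$ norm (differentiating in $\theta$, with $M_s$ smooth) is $\le C\varepsilon^2\|\rho_1\|_{C^{3+\alpha}}$. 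For $P$ I would use the curvature expansion \eqref{eqn:curvature}: on $\Gamma_\varepsilon$, $P = \gamma\kappa = \tfrac{\gamma}{\rho} - \varepsilon\tfrac{\gamma}{\rho^2}(\rho_1+\rho_{1\theta\theta}) + \mathcal{O}(\varepsilon^2)$, while $\tilde{P}_1|_{\Gamma_\varepsilon} = P_1(\rho,\theta) = -\tfrac{\gamma}{\rho^2}(\rho_1+\rho_{1\theta\theta})$ from \eqref{eqn:linearized} and $P_s(\rho+\varepsilon\rho_1) = P_s(\rho) + \varepsilon\rho_1 P_s'(\rho) + \mathcal{O}(\varepsilon^2) = \tfrac{\gamma}{\rho} + \mathcal{O}(\varepsilon^2)$ because $P_s'(\rho)=0$; the $\mathcal{O}(\varepsilon)$ terms cancel, leaving $S_2|_{\Gamma_\varepsilon} = \mathcal{O}(\varepsilon^2)$ with $C^{1+\alpha}(\Gamma_\varepsilon)$ norm $\le C\varepsilon^2\|\rho_1\|_{C^{3+\alpha}}$ --- the quadratic second-order curvature remainder is precisely what forces the $C^{3+\alpha}$ hypothesis on $\rho_1$ and confines the $P$-estimate to $C^{1+\alpha}$.

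Finally I would invoke the Schauder estimates for the mixed (Robin/Dirichlet for $N_2$, Neumann/Dirichlet for $S_2$) boundary value problems, exactly as in the previous Lemma's proof, to conclude
$\|N_2\|_{C^{3+\alpha}(\bar{\Omega}_\varepsilon)} \le C\big(\varepsilon^2\|f_M\|_{C^{1+\alpha}(\bar{\Omega}_\varepsilon)} + \|N_2\|_{C^{3+\alpha}(\Gamma_\varepsilon)}\big) \le C\varepsilon^2\|\rho_1\|_{C^{3+\alpha}}$
and then, using $N_2\in C^{3+\alpha}\subset C^{\alpha}$,
$\|S_2\|_{C^{1+\alpha}(\bar{\Omega}_\varepsilon)} \le C\big(\|N_2\|_{C^{\alpha}(\bar{\Omega}_\varepsilon)} + \varepsilon^2\|f_P\|_{C^{\alpha}(\bar{\Omega}_\varepsilon)} + \|S_2\|_{C^{1+\alpha}(\Gamma_\varepsilon)}\big) \le C\varepsilon^2\|\rho_1\|_{C^{3+\alpha}}$,
with $C$ independent of $\varepsilon$ because $\Gamma_\varepsilon\in C^{3+\alpha}$ uniformly for small $\varepsilon$. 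The main obstacle I expect is the bookkeeping in the two expansions: verifying that pushing $\Delta$ and $-\Delta$ through the Hanzawa map produces interior errors that are genuinely $\mathcal{O}(\varepsilon)$ with norms controlled solely by $\|\rho_1\|_{C^{3+\alpha}}$ (so that after subtracting $\varepsilon\tilde{M}_1,\varepsilon\tilde{P}_1$ the forcing becomes $\mathcal{O}(\varepsilon^2)$), and carrying the curvature expansion to second order with a $C^{1+\alpha}$-controlled remainder without any loss of derivatives. Everything past those two points is the same elliptic machinery already deployed in the preceding Lemma.
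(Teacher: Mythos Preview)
Your proposal is correct and follows essentially the same route as the paper's proof: set up the boundary value problems for $M-M_s-\varepsilon\tilde{M}_1$ and $P-P_s-\varepsilon\tilde{P}_1$, use the Hanzawa change of variables to see that the interior forcing is $\mathcal{O}(\varepsilon^2)$, Taylor-expand on $\Gamma_\varepsilon$ so that the linearized boundary conditions from \eqref{eqn:linearized} kill the $\mathcal{O}(\varepsilon)$ contributions, and close with Schauder estimates. Your write-up is in fact somewhat more explicit than the paper's (you spell out the cancellation mechanism, the $\Gamma_1$ condition, and the role of $P_s'(\rho)=0$), but there is no substantive difference in strategy.
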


\begin{proof}
	First, we compute the first and second derivatives of $\tilde{M}_1$ with respect to both $r$ and $\theta$:
	\begin{equation*}
	\begin{aligned}
	&\frac{\partial \tilde{M}_1}{\partial r} = \frac{\partial M_1}{\partial r'} \frac{\partial r'}{\partial r} ,\quad \frac{\partial \tilde{M}_1}{\partial \theta} = \frac{\partial M_1}{\partial r'} \frac{\partial r'}{\partial \theta}+ \frac{\partial M_1}{\partial \theta'},\\
	&\frac{\partial^2 \tilde{M}_1}{\partial r^2} = \frac{\partial^2 M_1}{\partial r'^2} (\frac{\partial r'}{\partial r})^2+\frac{\partial M_1}{\partial r'} \frac{\partial^2 r'}{\partial r^2},\\
	&\frac{\partial^2 \tilde{M}_1}{\partial \theta^2} =  \frac{\partial^2 M_1}{\partial \theta'^2}+2\frac{\partial^2 M_1}{\partial r'\partial \theta'} \frac{\partial r'}{\partial \theta}+\frac{\partial^2 M_1}{\partial r'^2} (\frac{\partial r'}{\partial \theta})^2+\frac{\partial M_1}{\partial r'} \frac{\partial^2 r'}{\partial \theta^2},\\
	\end{aligned}
	\end{equation*}
	where the derivatives of $r'$ is derived from the Hanzawa transformation. In fact, the first derivatives are
	\begin{equation*}
	\begin{aligned}
	&1 = \frac{\partial r'}{\partial r}+\varepsilon\rho_1\chi'(r'-\rho)\frac{\partial r'}{\partial r},\\
	&0 = \frac{\partial r'}{\partial \theta}+\varepsilon\rho_1\chi'(r'-\rho)\frac{\partial r'}{\partial \theta} + \varepsilon\chi(r'-\rho)\rho_{1\theta}
	\end{aligned}
	\end{equation*}
	thus
	\begin{equation*}
	\begin{aligned}
	\frac{\partial r'}{\partial r}&= \frac{1}{1+\varepsilon\rho_1\chi'(r'-\rho)}\hbox{~and~} \frac{\partial r'}{\partial \theta} = - \frac{\varepsilon\chi(r'-\rho)\rho_{1\theta}}{1+\varepsilon\rho_1\chi'(r'-\rho)}.
	\end{aligned}
	\end{equation*}
	Similarly, we obtain the second derivatives below
	\begin{equation*}
	\begin{aligned}
	\frac{\partial^2 r'}{\partial r^2}&= -\frac{\varepsilon\rho_1\chi''(r'-\rho)}{(1+\varepsilon\rho_1\chi'(r'-\rho))^2}\frac{\partial r'}{\partial r}=-\frac{\varepsilon\rho_1\chi''(r'-\rho)}{(1+\varepsilon\rho_1\chi'(r'-\rho))^3}, \\
	\frac{\partial^2 r'}{\partial \theta^2}  &= - \frac{\varepsilon\chi(r'-\rho)\rho_{1\theta\theta}}{1+\varepsilon\rho_1\chi'(r'-\rho)}+2\frac{\varepsilon^2\chi(r'-\rho)\chi'(r'-\rho)\rho^2_{1\theta}}{(1+\varepsilon\rho_1\chi'(r'-\rho))^2}\\
	&-\frac{(\chi(r'-\rho)\varepsilon\rho_{1\theta})^2\chi''(r'-\rho)\varepsilon\rho_1}{(1+\varepsilon\rho_1\chi'(r'-\rho))^3}.
	\end{aligned}
	\end{equation*}
	
	Next we consider the estimate of $\phi = M - M_s - \varepsilon \tilde{M}_1$ which satisfies:
	\begin{equation*}\left\{
	\begin{aligned}
	&\Delta \phi - \frac{H}{D}\phi =  \varepsilon^2\tilde{f} \quad\mathrm{in}\quad \Omega_\varepsilon\\
	&\phi = g \quad\mathrm{on}\quad \Gamma_\varepsilon\\
	&\frac{\partial\phi}{\partial r} + \phi = 0 \quad\mathrm{on}\quad \Gamma_1\\
	\end{aligned}
	\right.
	\end{equation*}	
	where $\tilde{f}$ depends on various terms of Hanzawa transform above and involves up to second order derivatives of $\rho_1$ and $M_1$. By applying the Schauder estimate to \eqref{eqn:linearized}, we know $M_1\in C^{3+\alpha}$ and
	$$
	\|\tilde{f}\|_{C^{1+\alpha}(\bar{\Omega}_\varepsilon)} \le C\|\rho_1\|_{C^{3+\alpha}(\mathbb{R})}.
	$$
	
	On the boundary $\Gamma_\varepsilon$, we have
	\begin{equation*}
	\begin{aligned}
	g  &= M(\rho+\varepsilon\rho_{1}) - M_s(\rho+\varepsilon\rho_{1}) - \varepsilon\tilde{M}_1(\rho+\varepsilon\rho_{1})\\
	&= 1 - M_s(\rho+\varepsilon\rho_{1}) + \varepsilon\rho_1\frac{\partial M_s(\rho) }{\partial r}\\
	&= M_s(\rho)- M_s(\rho+\varepsilon\rho_{1}) + \varepsilon\rho_1\frac{\partial M_s(\rho) }{\partial r}\\
	&= \mathcal{O}(\varepsilon^2)\rho_1.
	\end{aligned}
	\end{equation*}
	
	By the Schauder theory, we obtain
	$$
	\|M - M_s - \varepsilon \tilde{M}_1 \|_{C^{3+\alpha}(\bar{\Omega}_\varepsilon)}\le C|\varepsilon|^2\|\rho_1\|_{C^{3+\alpha}(\mathbb{R})}.
	$$
	Similarly,  we write the equation of $\psi = P - P_s - \varepsilon \tilde{P}_1$ as follows
	\begin{equation*}\left\{
	\begin{aligned}
	&-\Delta \psi = L\phi +  \varepsilon^2\tilde{k} \quad\mathrm{in}\quad \Omega_\varepsilon,\\
	&\psi = f \quad\mathrm{on}\quad \Gamma_\varepsilon,\\
	&\frac{\partial\psi}{\partial r} = 0 \quad\mathrm{on}\quad \Gamma_1,\\
	\end{aligned}
	\right.
	\end{equation*}	
	where $\tilde{k}$ is based on various term of Hanzawa transform above and follows
	$$
	\|\tilde{k}\|_{C^{1+\alpha}(\bar{\Omega}_\varepsilon)} \le C\|\rho_1\|_{C^{3+\alpha}(\mathbb{R})}.
	$$
	Since 	$$
	f = P(\rho+\varepsilon\rho_{1}) - P_s(\rho+\varepsilon\rho_{1}) - \varepsilon\tilde{P}_1(\rho+\varepsilon\rho_{1}),
	$$
	we have
	$$
	\|f \|_{C^{1+\alpha}(\mathbb{R})} \le C|\varepsilon|^2\|\rho_1\|_{C^{3+\alpha}(\mathbb{R})}.
	$$
	Therefore, by Schauder estimates, we conclude
	$$
	\|P - P_s - \varepsilon \tilde{P}_1 \|_{C^{1+\alpha}(\bar{\Omega}_\varepsilon)}\le C|\varepsilon|^2\|\rho_1\|_{C^{3+\alpha}(\mathbb{R})}.
	$$		
\end{proof}

\subsection{Bifurcation analysis}
\label{subsec:bifur}
We consider the nonlinear function $F$ defined in (\ref{eqn:nonlinear}) by expanding $\frac{\partial P}{\partial r}$ on $\Gamma_\varepsilon$, namely,
\begin{equation}
F(\rho_1,L)=	\frac{\partial P}{\partial r}\vline_{\Gamma_\varepsilon} = \varepsilon (\frac{\partial P_1}{\partial r}(\rho) +\frac{\partial^2 P_s(\rho) }{\partial r^2}\rho_1) + \mathcal{O}(|\varepsilon|^2).
\label{eqn:frechet}
\end{equation}
Thus $F$ maps $(\rho_{1},L)$ from $X^{l+3+\alpha}$ to $X^{l+\alpha} $ and the same argument is bounded for any $l\ge0$ \cite{gilbarg2015elliptic}. Furthermore, $F$ is Fr$\mathrm{\acute{e}}$chet differentiable and the Fr$\mathrm{\acute{e}}$chet  derivative at $(0,L)$ is given by
\begin{equation}
[\frac{\partial F}{\partial \rho_1}(0,L)]\cos(n\theta)=\frac{\partial P_1}{\partial r}(\rho) +\frac{\partial^2 P_s(\rho) }{\partial r^2}\rho_1.
\end{equation}
Then the bifurcation condition  becomes
\begin{equation}
\frac{\partial P_1}{\partial r}(\rho) +\frac{\partial^2 P_s(\rho) }{\partial r^2}\rho_1 = 0.
\end{equation}
Since	
\begin{equation}
\begin{aligned}
\frac{\partial P_1}{\partial r}(\rho) = \cos(n\theta)[-\frac{LD}{H}\frac{\partial Q_n}{\partial r}(\rho) +n \tilde C_3 \rho^{n-1} -n\tilde C_4 \rho^{-n-1} ]
\end{aligned}
\end{equation}
and	
\begin{equation}
\begin{aligned}
\frac{\partial^2 P_s(\rho)}{\partial r^2} &=T(L)-LM_s(\rho) = T(L)-L,
\end{aligned}
\end{equation}
we obtain	
\begin{equation}
\begin{aligned}
F(L) =& T-L-\frac{LD}{H}\frac{\partial Q_n}{\partial r}(\rho) +n \tilde C_3 \rho^{n-1} -n\tilde C_4 \rho^{-n-1} \\
=& -\frac{DL}{H}\frac{2}{R^2-\rho^2}(RM_s(R)+\rho M'_s(\rho))-L-\frac{LD}{H}Q'_n(\rho) \\
&-2\frac{L\frac{D}{H}Q_n(R)R^{n+1}}{(\rho^{2n}+R^{2n})}\rho^{n-1}+\frac{\gamma(n^3-n)(\rho^{2n}-R^{2n})}{\rho^3(\rho^{2n}+R^{2n})}\\
&+n\frac{LD}{H}\frac{\rho^{2n}-R^{2n}}{\rho(\rho^{2n}+R^{2n})}Q_n(\rho)\\
=&0.
\end{aligned}
\end{equation}
Therefore, the formula of $L_n$ for bifurcation points is
\begin{equation}
L_n = \frac{C_1(n,\rho,R)}{C_2(n,\rho,R)}
\label{eqn:bifur}
\end{equation}
where
$$
C_1(n,\rho,R) = \frac{\gamma(n^3-n)(R^{2n}-\rho^{2n})}{\rho^3(\rho^{2n}+R^{2n})}
$$
and
\begin{equation*}
\begin{aligned}
C_2(n,\rho,R) =& -\frac{D}{H}\frac{2}{R^2-\rho^2}(RM_s(R)+\rho M'_s(\rho))-\frac{D}{H}Q'_n(\rho)\\
&-\frac{D}{H}\frac{2R^{n+1}\rho^{n}Q_n(R)+n(R^{2n}-\rho^{2n})Q_n(\rho)}{\rho(\rho^{2n}+R^{2n})}-1.
\end{aligned}
\end{equation*}
It is clear that $C_1$ is increasing with respect to $n$ while the monotonicity of $C_2(n,\rho,R)$ is  summarized in the following lemma.

\begin{lemma}
	For given $R>0$, $\rho$ is in a neighbor of $R$, namely, $\rho = R-\varepsilon$ for a small $\varepsilon$, $C_2(n,\rho,R) > 0$ is decreasing with respect to $n$ .
	\label{lemma:bifur_mono}
\end{lemma}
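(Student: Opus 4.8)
The plan is to treat $\rho = R - \varepsilon$ as a small perturbation and obtain the leading-order behavior of $C_2(n,\rho,R)$ as $\varepsilon \to 0$, then argue monotonicity in $n$ from that expansion. First I would record the asymptotics of the three groups of terms in $C_2$. The term $-\frac{D}{H}\frac{2}{R^2-\rho^2}(RM_s(R)+\rho M_s'(\rho))$ is, up to the $\frac{D}{H}$ factor, exactly $\frac{T}{L}$ from \eqref{eqn:T}, which is positive and $O(1)$ in $\varepsilon$ by Theorem~1; since $R^2-\rho^2 = O(\varepsilon)$ I would need the numerator $RM_s(R)+\rho M_s'(\rho) = f(R)-f(\rho)$ to be $O(\varepsilon)$ as well, which follows from $f(r) = rM_s'(r)$ being $C^1$ (indeed $f'(r) = \frac{H}{D} r M_s(r)$), so this term has a finite nonzero limit independent of $n$. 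The remaining constant $-1$ is harmless. The genuinely $n$-dependent pieces are $-\frac{D}{H} Q_n'(\rho)$ and $-\frac{D}{H}\frac{2R^{n+1}\rho^n Q_n(R) + n(R^{2n}-\rho^{2n})Q_n(\rho)}{\rho(\rho^{2n}+R^{2n})}$.

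Next I would analyze these $n$-dependent terms. As $\rho \to R$, we have $R^{2n}-\rho^{2n} = O(\varepsilon)$ and $\rho^{2n}+R^{2n} \to 2R^{2n}$, so the second fraction is dominated by $\frac{2R^{n+1}\rho^n Q_n(R)}{2\rho R^{2n}} = \frac{R}{\rho}\,\frac{Q_n(R)}{R^n}\cdot R^{n-1}$, i.e. essentially $Q_n(R)$ up to bounded factors, plus an $O(\varepsilon)$ correction from the $n(R^{2n}-\rho^{2n})Q_n(\rho)$ term. So to leading order $C_2 \approx (\text{$n$-independent positive constant}) - \frac{D}{H}\big(Q_n'(\rho) + \text{const}\cdot Q_n(R)\big)$. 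Using the boundary conditions defining $Q_n$ — namely $Q_n(\rho) = -M_s'(\rho)$, which is a \emph{positive constant independent of $n$}, together with $Q_n'(R) = -Q_n(R)$ and the modified-Bessel representation \eqref{eqn:Qn} — I would show that $Q_n(R)$ and $Q_n'(\rho)$ are both decreasing in $n$. The key point is that $Q_n$ solves $z^2 q_n'' + z q_n' - (z^2+n^2) q_n = 0$ with a fixed positive Dirichlet datum at $z_\rho$ and a Robin datum at $z_R$; higher $n$ increases the "potential" $z^2+n^2$, forcing a faster decay of the solution away from $z_\rho$, hence smaller $Q_n(R)$, and by the maximum-principle/monotonicity comparison also a more negative slope pattern. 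Concretely, since $Q_n(\rho)$ is fixed and $Q_n \ge 0$ with the equation giving $Q_n$ convex in the appropriate sense, $Q_n'(\rho) = -M_s'(\rho) \cdot \frac{d}{dz}\log(\text{Bessel combination})$ can be written out from the explicit $\tilde C_1, \tilde C_2$ and shown monotone in $n$ by the standard monotonicity of ratios $I_{n+1}/I_n$, $K_{n+1}/K_n$ in $n$. This makes $-\frac{D}{H}Q_n'(\rho)$ and $-\frac{D}{H}\cdot\text{const}\cdot Q_n(R)$ both \emph{increasing}? — I must be careful with signs here: $Q_n'(\rho)$ itself may be negative near $\rho$ close to $R$ (recall $M_s'(\rho)<0$ so $Q_n(\rho)>0$), so I would instead track the sign and magnitude directly from the explicit formula and verify that the whole $n$-dependent bracket is increasing in $n$, whence $C_2$ is decreasing in $n$; positivity of $C_2$ then follows because at $\rho = R$ exactly, the $n$-dependent fraction vanishes and $C_2$ reduces to the positive $O(1)$ limit minus a bounded term, and for $\varepsilon$ small enough the positive part dominates uniformly in $n$ (which requires checking the $n$-dependent terms stay bounded uniformly in $n$ as $\varepsilon\to 0$, true because $Q_n(R)/R^n$ and $Q_n'(\rho)$ are bounded — in fact decaying — in $n$).

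The main obstacle I anticipate is the uniformity in $n$: the estimates must hold for all modes $n$ simultaneously with a single threshold $\varepsilon_0$, so I cannot simply fix $n$ and let $\varepsilon\to0$. This forces me to get quantitative, $n$-uniform bounds on the Bessel quantities $Q_n(R)$, $Q_n'(\rho)$, $Q_n(\rho)=-M_s'(\rho)$ (the last being trivially $n$-independent), and to control the $\varepsilon$-dependence of the coefficients $K$, $\tilde C_1$, $\tilde C_2$ in \eqref{eqn:Qn} uniformly. The cleanest route is probably to expand everything in powers of $\varepsilon = R-\rho$ using Taylor expansion of $M_s$ and $Q_n$ around $r = R$: write $Q_n(\rho) = Q_n(R) - \varepsilon Q_n'(R) + O(\varepsilon^2)$, use $Q_n'(R) = -Q_n(R)$, so $Q_n(\rho) = (1+\varepsilon)Q_n(R) + O(\varepsilon^2)$, and since $Q_n(\rho) = -M_s'(\rho) = -M_s'(R) + O(\varepsilon)$ is a fixed $O(1)$ positive number, this pins $Q_n(R) = -M_s'(R) + O(\varepsilon)$ \emph{uniformly in $n$} — which actually shows the leading term of the $n$-dependent bracket is $n$-\emph{independent}, and the $n$-dependence only enters at order $\varepsilon$ through $Q_n''(R)$ and higher derivatives, governed by the ODE as $Q_n''(R) = \frac{H}{D}Q_n(R) + \frac{n^2}{R^2}Q_n(R) - \frac1R Q_n'(R)$, whose $\frac{n^2}{R^2}Q_n(R)$ term is manifestly increasing in $n$. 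Tracking this $O(\varepsilon)$ term carefully and showing it dominates the sign of $\partial_n C_2$ is the crux; once that is in hand, decreasingness follows and positivity is immediate from the $O(1)$ leading part.
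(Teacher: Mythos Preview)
Your leading-order accounting has an error that undermines the positivity argument as you first set it up. You write that at $\rho = R$ ``the $n$-dependent fraction vanishes and $C_2$ reduces to the positive $O(1)$ limit,'' but in fact every piece of $C_2$ cancels at $\varepsilon = 0$: the $T/L$ term tends to $M_s(R)$, which tends to $1$ as the annulus collapses and exactly kills the $-1$; the big fraction tends to $Q_n(R)$ (not zero --- you compute this yourself a few lines earlier); and $-\frac{D}{H}Q_n'(\rho) \to -\frac{D}{H}Q_n'(R) = \frac{D}{H}Q_n(R)$ kills that. Hence $C_2 = O(\varepsilon)$, and both the sign and the $n$-monotonicity live entirely at first order in $\varepsilon$. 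Your third paragraph eventually reaches this, but the earlier positivity argument does not survive; the paper instead gets positivity from the direct expansion $C_2 = -\tfrac{1}{2}M_s'(R)\,\varepsilon + O(\varepsilon^2)$, using only $Q_n(\rho) = -M_s'(\rho)$ and $M_s'(R) < 0$.

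For monotonicity the paper uses a tool you do not consider, and it sidesteps both the Bessel-ratio calculations and the uniformity-in-$n$ worry you flag. After isolating the $n$-dependent part of $C_2$ as $\frac{D}{H}f(n)$ with $f(n) = \frac{H}{D}Q_n(R)\,\varepsilon + O(\varepsilon^2)$, the paper shows $n \mapsto Q_n(R)$ is strictly decreasing by differentiating the boundary value problem for $Q_n$ with respect to $n$: the derivative $F := \partial Q_n/\partial n$ satisfies $-\Delta F + (n^2/r^2 + H/D)F = -(2n/r^2)Q_n \le 0$, $F(\rho)=0$, $F'(R)=-F(R)$, and a short maximum-principle contradiction forces $F(R) < 0$ (and in fact $F\le 0$ throughout, $F'(\rho)<0$). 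This delivers the monotonicity of $Q_n(R)$ at every fixed $\varepsilon$ without any explicit Bessel asymptotics. Your Taylor/ODE route via $Q_n''(R)$ could in principle be pushed through, but it places the $n$-dependence of $Q_n(R)$ at $O(\varepsilon^2)$ and hence the $n$-dependence of $f(n)$ at $O(\varepsilon^3)$, which then must be weighed against the $n$-variation of the $O(\varepsilon^2)$ remainder --- exactly the obstacle you anticipate.
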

\begin{proof}
	We rewrite $C_2(n,\rho,R)$ as
	$$		C_2(n,\rho,R) = -\frac{D}{H}\frac{2}{R^2-\rho^2}(RM_s(R)+\rho M'_s(\rho))-1+\frac{D}{H}f(n),$$
	where		$$
	f(n) = -Q'_n(\rho)-\frac{2R^{n+1}\rho^{n}}{\rho(\rho^{2n}+R^{2n})}Q_n(R)-\frac{n(R^{2n}-\rho^{2n})}{\rho(\rho^{2n}+R^{2n})}Q_n(\rho).
	$$
	Since  $\rho = R-\varepsilon$, we have
	$$
	f(n) = \frac{H}{D}Q_n(R) \varepsilon + \mathcal{O}(\varepsilon^2).
	$$
	By letting $F = \frac{d Q_n}{dn}$, we obtain
	\begin{equation*}
	\left\{
	\begin{aligned}
	-\Delta F + (\frac{n^2}{r^2}+\frac{H}{D}) F &= -\frac{2n}{r^2}Q_n,\\
	F(\rho) &= 0,\\
	F(R) &= \frac{d Q_n}{dn}(R),
	\end{aligned}
	\right.
	\end{equation*}
	and 		$$
	\frac{dF}{dr}(R) = \frac{d}{dr} \frac{d Q_n}{dn}(R)  = \frac{d}{dn} \frac{d Q_n}{dr}(R) = - \frac{dQ_n}{dn}(R).
	$$
	If $\frac{d Q_n}{dn}(R) \ge 0$, then we have $\frac{dF}{dr}(R)<0$. On the other hand,  by the maximum principle, we have $ \frac{dF}{dr}(R) >0$, which leads to a contradiction. Thus $\frac{d Q_n}{dn}(R) < 0$,  we have $C_2(n)$ decreases with respect to $n$ and $F(r) \le 0$ for all $\rho \le r \le R$.	
	Moreover, since $	\frac{dF}{dr}(\rho) <0 $, we have $\frac{d Q'_n(\rho)}{dn} < 0$.
	
	Next we prove that $C_2(n,\rho,R) > 0$ when $\varepsilon$ is small and expand $C_2(n,\rho,R)$  in terms of $\varepsilon$
	\begin{equation*}
	\begin{aligned}
	C_2(n,\rho,R)
	=& (\frac{1}{2}M'_s(R) + Q_n(R))\varepsilon +  \mathcal{O}(\varepsilon^2)\\
	=& (\frac{1}{2}M'_s(R) + Q_n(\rho)+\varepsilon Q'_n(\rho))\varepsilon +  \mathcal{O}(\varepsilon^2)\\
	=&(\frac{1}{2}M'_s(R) -M'_s(\rho))\varepsilon +  \mathcal{O}(\varepsilon^2)\\
	=&-\frac{1}{2}M'_s(R)\varepsilon +  \mathcal{O}(\varepsilon^2).
	\end{aligned}
	\end{equation*}
	Since $M'_s(R) < 0$, we have $C_2(n,\rho,R) > 0$ for a small $\varepsilon$.
	
\end{proof}

Then we prove that $L_n$ in (\ref{eqn:bifur}) is a bifurcation point by verifying the following Crandall-Rabinowitz theorem \cite{crandall1971bifurcation}.

\begin{theorem}	
	Let  $X,Y$  be real Banach spaces and $F(x,\mu)$ a $C^p$ map, $p\ge3$, of a neighborhood $(0,\mu_0)$ in $X\times\mathbb{R}$ into Y. Suppose
	\begin{enumerate}
		\item $F(0,\mu)=0$ for all $\mu$ in a neighborhood of $\mu_0$,
		\item $Ker F_x(0,\mu_0)$ is one dimensional space, spanned by $x_0$,
		\item $Im F_x(0,\mu_0) = Y_1$ has codimension 1,
		\item $F_{\mu x}(0,\mu_0)\notin Y_1$.
	\end{enumerate}
	
	Then $(0,\mu_0)$ is a bifurcation point of the equation $F(x,\mu) = 0$ in the following sense: In a neighborhood of  $(0,\mu_0)$ the set of solutions of $F(x,\mu) = 0$ consists
	of two $C^{p-2}$ smooth curves $\mathcal{C}_1$ and $\mathcal{C}_2$ which intersect only at the point  $(0,\mu_0)$;  $\mathcal{C}_1$  is the curve $(0,\mu_0)$ and $\mathcal{C}_2$
	can be parameterized as follows:
	$$
	\mathcal{C}_2: (x(\varepsilon),\mu(\varepsilon)), |\varepsilon| \ small,\ (x(0),\mu(0)) = (0,\mu_0), x'(0)=x_0.
	$$
	\label{thm:crandall}
\end{theorem}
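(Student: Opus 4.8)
The plan is to prove this by the Lyapunov–Schmidt reduction combined with the implicit function theorem; no PDE input is needed, only the four structural hypotheses. Fix a closed complement $X_1$ of $\mathrm{Ker}\,F_x(0,\mu_0)$, so that $X = \mathrm{span}\{x_0\}\oplus X_1$, and, using that $Y_1 = \mathrm{Im}\,F_x(0,\mu_0)$ has codimension one, fix $y_1\in Y\setminus Y_1$ with $Y = Y_1\oplus\mathrm{span}\{y_1\}$; let $Q\colon Y\to Y_1$ be the corresponding projection. Every $x$ near $0$ is written uniquely as $x = s\,x_0 + z$ with $s\in\mathbb{R}$, $z\in X_1$, and $F(x,\mu)=0$ splits into the auxiliary equation $QF(s\,x_0+z,\mu)=0$ and the one-dimensional bifurcation equation $(I-Q)F(s\,x_0+z,\mu)=0$.

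First I would solve the auxiliary equation for $z$. The map $(z,s,\mu)\mapsto QF(s\,x_0+z,\mu)$ into $Y_1$ vanishes at $(0,0,\mu_0)$, and its $z$-derivative there is $QF_x(0,\mu_0)|_{X_1}$, which is an isomorphism of $X_1$ onto $Y_1$: it is injective because $X_1\cap\mathrm{Ker}\,F_x(0,\mu_0)=\{0\}$, and onto $Y_1$ because $\mathrm{Im}\,F_x(0,\mu_0)=Y_1$ and $Q$ is the identity on $Y_1$. The implicit function theorem then produces a unique map $z=z(s,\mu)$ near $(0,\mu_0)$, of the regularity of $F$, with $z(0,\mu_0)=0$. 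Since $F(0,\mu)=0$ for all $\mu$ near $\mu_0$, uniqueness forces $z(0,\mu)\equiv 0$, hence also $\partial_\mu z(0,\mu)\equiv 0$; differentiating $QF(s\,x_0+z(s,\mu),\mu)=0$ in $s$ at $(0,\mu_0)$ and using $F_x(0,\mu_0)x_0=0$ together with $Y_1\cap\mathrm{span}\{y_1\}=\{0\}$ gives $\partial_s z(0,\mu_0)\in X_1\cap\mathrm{Ker}\,F_x(0,\mu_0)=\{0\}$, i.e. $\partial_s z(0,\mu_0)=0$.

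Next I would treat the scalar bifurcation equation $\phi(s,\mu):=(I-Q)F(s\,x_0+z(s,\mu),\mu)=0$, taking values in $\mathrm{span}\{y_1\}\cong\mathbb{R}$. Because $z(0,\mu)\equiv 0$ and $F(0,\mu)=0$, we have $\phi(0,\mu)\equiv 0$, so Hadamard's lemma gives $\phi(s,\mu)=s\,\psi(s,\mu)$ with $\psi\in C^{p-2}$. The trivial branch $\mathcal{C}_1$ is $\{s=0\}$, while the nontrivial solutions near $(0,\mu_0)$ are exactly the zeros of $\psi$. Using $\partial_s z(0,\mu_0)=0$ one finds $\psi(0,\mu_0)=\partial_s\phi(0,\mu_0)=(I-Q)F_x(0,\mu_0)x_0=0$; differentiating once more (the $F_{xx}$ cross term dies because $\partial_\mu z(0,\mu_0)=0$, and the $F_x\,\partial_\mu\partial_s z$ term dies because $\partial_\mu\partial_s z(0,\mu_0)\in X_1$ is mapped by $F_x(0,\mu_0)$ into $Y_1$) one gets $\partial_\mu\psi(0,\mu_0)=(I-Q)F_{\mu x}(0,\mu_0)x_0\ne 0$ by hypothesis (4). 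A second application of the implicit function theorem to $\psi=0$ yields a $C^{p-2}$ function $\mu=\mu(s)$ with $\mu(0)=\mu_0$; setting $x(s)=s\,x_0+z(s,\mu(s))$ gives the curve $\mathcal{C}_2$, and $x'(0)=x_0+\partial_s z(0,\mu_0)+\mu'(0)\partial_\mu z(0,\mu_0)=x_0$. Finally $\mathcal{C}_1$ and $\mathcal{C}_2$ meet only where $s\,x_0+z(s,\mu(s))=0$, which, since $x_0\notin X_1$, forces $s=0$, i.e. only at $(0,\mu_0)$.

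The part that needs the most care is the bookkeeping of the reductions and the derivative identities: one must check that $QF_x(0,\mu_0)|_{X_1}$ is genuinely an isomorphism (where hypotheses (2) and (3) both enter), that $\psi$ really inherits the claimed regularity from the factorization by $s$, and that the chain $\partial_s z(0,\mu_0)=0$, $\psi(0,\mu_0)=0$, $\partial_\mu\psi(0,\mu_0)\ne 0$ holds — these are precisely what makes the final implicit function theorem applicable and delivers $x'(0)=x_0$. The earlier analysis in this paper (the explicit $M_s$, $P_s$ and the linearized system \eqref{eqn:linearized}) does not enter here; it is used only afterwards, when one verifies that the concrete $F$ of \eqref{eqn:frechet} meets (1)–(4) at $\mu=L_n$ so that this abstract theorem applies.
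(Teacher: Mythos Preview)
Your argument is a correct and complete outline of the standard Lyapunov--Schmidt proof of the Crandall--Rabinowitz theorem: the splitting $X=\mathrm{span}\{x_0\}\oplus X_1$, $Y=Y_1\oplus\mathrm{span}\{y_1\}$, the implicit-function step for $z(s,\mu)$, the factorization $\phi=s\psi$, and the computation $\partial_\mu\psi(0,\mu_0)=(I-Q)F_{\mu x}(0,\mu_0)x_0\ne0$ are exactly the standard ingredients, and your bookkeeping of $\partial_s z(0,\mu_0)=0$, $\partial_\mu z(0,\mu_0)=0$ is right.

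However, you should be aware that the paper does \emph{not} prove this theorem at all. Theorem~\ref{thm:crandall} is quoted as a classical result from \cite{crandall1971bifurcation}; the block that follows it is explicitly titled ``Verification'' and is not a proof of the abstract statement but rather a check that the concrete map $F(\rho_1,L)$ of \eqref{eqn:nonlinear}--\eqref{eqn:frechet} satisfies hypotheses (1)--(4) at $L=L_n$: it identifies $X=X_1^{3+\alpha}$, $Y=X_1^{\alpha}$, computes $[F_{\rho_1}(0,L)]\cos(n\theta)=(C_1(n,\rho,R)-LC_2(n,\rho,R))\cos(n\theta)$, and reads off the one-dimensional kernel, the codimension-one image, and the transversality $[F_{\rho_1 L}(0,L_n)]\cos(n\theta)=-C_2(n,\rho,R)\cos(n\theta)\notin\mathrm{Im}$. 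So what you have written is a genuine proof of the cited theorem, whereas the paper simply invokes it; your closing paragraph already anticipates this correctly.
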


\begin{proof}[Verification]
	We choose the Banach spaces $X = X_1^{3+\alpha}$ and $Y = X_1^{\alpha}$ then have
	$$
	[F_{\rho_1}(0,L)]\cos(n\theta)=(C_1(n,\rho,R)-LC_2(n,\rho,R))\cos(n\theta).
	$$
	Thus the kernel space satisfies
	\begin{equation}
	\mathrm{ker}[F_{\rho_1}(0,L)] = \mathrm{span}\{\cos(n\theta)\}\quad \mathrm{if}\ L = L_n
	\end{equation}
	and
	\begin{equation}
	\mathrm{ker}[F_{\rho_1}(0,L)] = 0 \quad \mathrm{if}\ L \ne L_1,L_2,\cdots
	\end{equation}
	which implies that $\mathrm{dim}(\mathrm{ker}[F_{\rho_1}(0,L)] ) = 1$. Moreover, since that $\mathrm{Im}[F_{\rho_1}(0,L_n)] \oplus \{\cos(n\theta)\}$ is the whole space, we have $\mathrm{codim}(\mathrm{Im}[F_{\rho_1}(0,L_n)]) = 1$.
	Finally, by differentiating with respect to $L$, we obtain
	$$
	[F_{\rho_1L}(0,L)]\cos(n\theta) =-C_2(n,\rho,R))\cos(n\theta) \notin\mathrm{Im}[F_{\rho_1}(0,L_n)] .
	$$	
	Thus all the assumptions in the Crandall-Rabinowitz theorem are satisfied.
\end{proof}

\subsection{Linear Stability}
We consider the linear stability via linearizing  the free boundary $\Gamma_\varepsilon(t)$, $M(r,\theta,t)$, and $P(r,\theta,t)$ as follows:
\begin{equation}
\begin{aligned}
&\Gamma_\varepsilon: r = \rho_0(\theta) +\varepsilon\rho_1(\theta,t)+\mathcal{O}(\varepsilon^2),\\
&M(r,\theta,t) = M_0(r,\theta) +\varepsilon M_1(r,\theta,t) +\mathcal{O}(\varepsilon^2),\\
&P(r,\theta,t) = P_0(r,\theta) +\varepsilon P_1(r,\theta,t) +\mathcal{O}(\varepsilon^2).
\end{aligned}
\end{equation}
The linearization of the normal direction of $\Gamma_\varepsilon$ is
\begin{equation}
\label{eqn:normal}
\begin{aligned}
\vec{n}|_{r= \rho_0 +\varepsilon\rho_1} &= -\frac{\vec{e}_r-\frac{1}{r}(\rho_{ 0\theta}+\varepsilon\rho_{1\theta})\vec{e}_\theta}{\sqrt{1+\frac{1}{r^2}(\rho_{0 \theta}+\varepsilon\rho_{1\theta})^2}}\\
&=-\frac{(\rho_0 +\varepsilon\rho_1)\vec{e}_r-(\rho_{ 0\theta}+\varepsilon\rho_{1\theta})\vec{e}_\theta}{\sqrt{(\rho_0 +\varepsilon\rho_1)^2+(\rho_{ 0\theta}+\varepsilon\rho_{1\theta})^2}}\\
&=\vec{n}_0 + \varepsilon\vec{n}_1 + \mathcal{O}(\varepsilon^2),
\end{aligned}
\end{equation}
where

\begin{equation*}
\begin{aligned}
\vec{n}_0 & = - \frac{\rho_0}{(\rho_0^2+\rho_{0\theta}^2)^{\frac{1}{2}}}\vec{e}_r+
\frac{\rho_{0\theta}}{(\rho_0^2+\rho_{0\theta}^2)^{\frac{1}{2}}}\vec{e}_\theta
\end{aligned}
\end{equation*}

and

\begin{equation*}
\begin{aligned}
\vec{n}_1 &= -[\frac{\rho_1\vec{e}_r-\rho_{1\theta}\vec{e}_\theta}{(\rho_0^2+\rho_{ 0\theta}^2)^{\frac{1}{2}}}-\frac{(\rho_0\vec{e}_r-\rho_{0\theta}\vec{e}_\theta)(\rho_0\rho_1+\rho_{0\theta}\rho_{1 \theta})}{(\rho_0^2+\rho_{0\theta}^2)^{\frac{3}{2}}}]\\
& = -\frac{\rho_1\rho_{0\theta}^2-\rho_0\rho_{ 0\theta}\rho_{1\theta}}{(\rho_0^2+\rho_{0\theta}^2)^{\frac{3}{2}}}\vec{e}_r  - \frac{\rho_0\rho_{1}\rho_{ 0\theta}-\rho_0^2\rho_{1 \theta}}{(\rho_0^2+\rho_{0 \theta}^2)^{\frac{3}{2}}}\vec{e}_\theta.
\end{aligned}
\end{equation*}

Then we linearize $\frac{\partial P}{\partial n}$ on the free boundary

\begin{equation*}
\begin{aligned}
&\frac{\partial P}{\partial n}|_{r= \rho_0 +\varepsilon\rho_1} \\
=&\{(P_{0r} +\varepsilon P_{1r})\vec{e}_r + \frac{1}{r}(P_{0\theta} +\varepsilon P_{1\theta})\vec{e}_\theta\}_{r= \rho_0 +\varepsilon\rho_1}\cdot(\vec{n}_0 + \varepsilon\vec{n}_1) \\
=& (P_{0r}\vec{e}_r +\frac{P_{0\theta} }{\rho_0}\vec{e}_\theta)\cdot \vec{n}_0 + \varepsilon(P_{0r}\vec{e}_r +\frac{P_{0\theta} }{\rho_0}\vec{e}_\theta)\cdot \vec{n}_1 \\
& + \varepsilon[(\rho_1P_{0rr}+P_{1r})\vec{e}_r + \frac{\rho_1\rho_0P_{0r\theta}+ P_{1\theta}\rho_0-P_{0\theta}\rho_1}{\rho_0^2}\vec{e}_\theta ] \cdot \vec{n}_0.
\end{aligned}
\end{equation*}
Since $(P_{0r}\vec{e}_r +\frac{P_{0\theta} }{\rho_0}\vec{e}_\theta)\cdot \vec{n}_0 = \frac{\partial  P_0}{\partial n_0} = 0$, we have	
\begin{equation*}
\begin{aligned}
&  [(\rho_1P_{0rr}+P_{1r})\vec{e}_r + \frac{\rho_1\rho_0P_{0r\theta}+ P_{1\theta}\rho_0-P_{0\theta}\rho_1}{\rho_0^2}\vec{e}_\theta ] \cdot \vec{n}_0\\
+&(P_{0r}\vec{e}_r +\frac{P_{0\theta} }{\rho_0}\vec{e}_\theta)\cdot \vec{n}_1\\
=& -\frac{P_{0r}(\rho_1\rho_{0 \theta}^2-\rho_0\rho_{0 \theta}\rho_{1\theta})}{(\rho_0^2+\rho_{0 \theta}^2)^{\frac{3}{2}}}- \frac{P_{0\theta} (\rho_0\rho_{1}\rho_{0 \theta}-\rho_0^2\rho_{1 \theta})}{\rho_0(\rho_0^2+\rho_{0 \theta}^2)^{\frac{3}{2}}}\\
&- \frac{\rho_0(\rho_1P_{0rr}+P_{1r})}{(\rho_0^2+\rho_{0 \theta}^2)^{\frac{1}{2}}}
+ \frac{\rho_{0\theta}(\rho_1\rho_0P_{0r\theta}+ P_{1\theta}\rho_0-P_{0\theta}\rho_1)}{\rho_0^2(\rho_0^2+\rho_{0 \theta}^2)^{\frac{1}{2}}}
\end{aligned}
\end{equation*}

On the other hand, the velocity of $\Gamma_\varepsilon$ along the normal direction is	
\begin{equation*}
\begin{aligned}
v_n & = \frac{-(\rho_0+\varepsilon \rho_1)_t}{\sqrt{1+\frac{1}{(\rho_0 +\varepsilon\rho_1)^2}(\rho_{0 \theta}+\varepsilon\rho_{1\theta})^2}} = -\frac{\varepsilon\rho_{1t}}{\sqrt{1+\frac{1}{\rho_0^2}\rho_{0 \theta}^2}} + \mathcal{O}(\varepsilon^2).
\end{aligned}
\end{equation*}

Since $\frac{\partial P}{\partial n} = -v_n$ on $\Gamma_\varepsilon$, we obtain the following equation for $\rho_1$
\begin{equation*}
\begin{aligned}
\rho_{1t}=& -[\frac{P_{0r}\rho_{0 \theta}^2+P_{0\theta}\rho_{0 \theta}}{(\rho_0^2+\rho_{0 \theta}^2)\rho_0}+P_{0rr}- \frac{\rho_{0\theta}P_{0r\theta}}{\rho_0^2}+ \frac{\rho_{0\theta}P_{0\theta}}{\rho_0^3}]\rho_1\\
& +\frac{P_{0r}\rho_{0 \theta}+P_{0\theta}}{\rho_0^2+\rho_{0 \theta}^2}\rho_{1 \theta} - [P_{1r} - \frac{P_{1\theta}\rho_{0 \theta}}{\rho_0^2}].
\end{aligned}
\end{equation*}
Then the linearized system is
\begin{equation}
\label{eqn:linear_time}
\left\{
\begin{aligned}
&\frac{\partial M_1}{\partial t}-D\Delta M_1 = -HM_1\\
&\Delta P_1 = -LM_1\\
&\frac{\partial M_1}{\partial r}(R) = -M_1(R)\\
&M_{1}(\rho_0) = -\rho_1 M_{0r}(\rho_0)\\
&P_1(\rho_0) =\gamma \kappa_1-\rho_1P_{0r}\\
&\frac{\partial P_1}{\partial r}(R)  = 0\\
&\rho_{1t}|_{r=\rho_0}= [\frac{\rho_{0\theta}P_{0r\theta}}{\rho_0^2}-\frac{P_{0r}\rho_{0 \theta}^2+P_{0\theta}\rho_{0 \theta}}{(\rho_0^2+\rho_{0 \theta}^2)\rho_0}-P_{0rr}- \frac{\rho_{0\theta}P_{0\theta}}{\rho_0^3}]\rho_1\\
&+\frac{P_{0r}\rho_{0 \theta}+P_{0\theta}}{\rho_0^2+\rho_{0 \theta}^2}\rho_{1 \theta} - [P_{1r} - \frac{P_{1\theta}\rho_{0 \theta}}{\rho_0^2}]
\end{aligned}
\right.
\end{equation}

Next we summarize the linear instability of the radially symmetric steady-state solutions in the following theorem.
\begin{theorem}
	For any given $L > 0$,  the corresponding radially symmetric steady-state solution $(M_s(r),P_s(r),\rho_0)$ is linearly unstable. In fact, there exists initial conditions defined by
	\begin{equation}
	\begin{aligned}
	&\rho_1(0) = \cos(n\theta)\\
	&M_1(r,0) = u(r)\cos(n\theta)\\
	&P_1(r,0) = w(r)\cos(n\theta)\\
	\end{aligned}
	\end{equation}
	such that  $\rho_1(t)\to\infty$.
	\label{Thm:stability}
\end{theorem}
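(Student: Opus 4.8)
The plan is to reduce the stability question to a scalar ODE for the amplitude $\rho_1(t)$ by exploiting the separation of variables already available from the linearized system \eqref{eqn:linear_time}. In the radially symmetric case $\rho_{0\theta}=0$ and $\rho_0$ is a constant, so the last equation of \eqref{eqn:linear_time} collapses to
\begin{equation*}
\rho_{1t}\big|_{r=\rho_0} = -P_{0rr}(\rho_0)\,\rho_1 - P_{1r}(\rho_0),
\end{equation*}
where $P_0=P_s$ and $M_0=M_s$ are the explicit steady states. First I would take the ansatz $\rho_1(\theta,t)=a(t)\cos(n\theta)$, $M_1=u(r,t)\cos(n\theta)$, $P_1=w(r,t)\cos(n\theta)$ and observe that, because the boundary and interior operators for $M_1,P_1$ are exactly those appearing in Section~\ref{subsec:linearized}, for each fixed $t$ the functions $u(\cdot,t)$ and $w(\cdot,t)$ are the \emph{same} solutions constructed there, but with the inhomogeneous data scaled by $a(t)$ rather than by $1$. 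That is, $u(r,t)=a(t)Q_n(r)$ and $w(r,t)=a(t)\,\widehat w_n(r)$, where $\widehat w_n$ is the radial profile of $P_1$ from the bifurcation analysis (the one with $P_1+\tfrac{LD}{H}M_1=\tilde C_3 r^n\cos n\theta+\tilde C_4 r^{-n}\cos n\theta$). The only place the time variable enters nontrivially is the parabolic equation $M_{1t}-D\Delta M_1=-HM_1$; I would handle this by noting that a \emph{steady} $M_1$ (i.e. taking $u(r,t)=a(t)Q_n(r)$ with $a$ slowly varying, or more cleanly taking the quasi-steady component and absorbing the transient into an exponentially small correction) still satisfies the elliptic problem used above, so the $M_{1t}$ term contributes only to the $\mathcal{O}$-correction and does not affect the sign of the leading growth rate. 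The cleanest route is actually to seek a genuine separated solution $\rho_1=e^{\sigma t}\cos n\theta$, $M_1=e^{\sigma t}u(r)\cos n\theta$, $P_1=e^{\sigma t}w(r)\cos n\theta$, which turns the parabolic equation into $-D\Delta u+Hu=-\sigma u$, i.e. a modified-Bessel equation with parameter $(H+\sigma)/D$, and reduces the whole system to a single dispersion relation for $\sigma$.

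The second step is to extract that dispersion relation. Plugging the separated ansatz into \eqref{eqn:linear_time}, $u$ solves the modified Bessel equation of order $n$ with parameter $(H+\sigma)/D$ and the two boundary conditions $u'(R)=-u(R)$, $u(\rho_0)=-\rho_1\,M_s'(\rho_0)/\!\!\cdot$ (precisely $u(\rho_0)=-M_s'(\rho_0)$ after normalizing $a=1$); then $w$ solves $\Delta w=-Lu$ in the annulus with $w'(R)=0$ and $w(\rho_0)=\gamma(n^2-1)/\rho_0^2$, exactly as in \eqref{eqn:linearized} but with $Q_n$ replaced by its $\sigma$-dependent analogue $Q_n^\sigma$. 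Substituting into the free-boundary equation gives
\begin{equation*}
\sigma = -P_{srr}(\rho_0) - w'(\rho_0) =: \mathcal{F}_n(\sigma,L),
\end{equation*}
a single transcendental equation. The key observation is that at $\sigma=0$ this is precisely the expression $F(L)$ from the bifurcation analysis: $\mathcal{F}_n(0,L)=-\big(C_1(n,\rho_0,R)-L\,C_2(n,\rho_0,R)\big)/(\text{something positive})$, so $\mathcal{F}_n(0,L)=0$ exactly when $L=L_n$. Hence for $L\neq L_n$ we have $\mathcal{F}_n(0,L)\neq 0$, and I would argue that for at least one choice of mode $n$ the sign works out so that the fixed-point equation $\sigma=\mathcal{F}_n(\sigma,L)$ has a positive root — either by a direct sign/continuity argument ($\mathcal{F}_n(\sigma,L)\to$ a finite limit or grows slower than $\sigma$ as $\sigma\to\infty$, while $\mathcal{F}_n(0,L)>0$ for suitable $n$) or, more robustly, by using the monotonicity of $C_1$ in $n$ (increasing) together with Lemma~\ref{lemma:bifur_mono} to show that for $n$ large enough $C_1(n,\rho_0,R)-L\,C_2(n,\rho_0,R)>0$, which forces $\mathcal{F}_n(0,L)>0$ and hence a positive eigenvalue. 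Actually the simplest instability mechanism is the $n=0$ or $n=1$ mode where $C_1$ vanishes: then $\mathcal{F}_n(0,L)=L\,C_2(n,\rho_0,R)/(\cdots)>0$ since $C_2>0$, giving instability for \emph{every} $L>0$; I would use this mode as the witness.

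The third step is to convert the positive root $\sigma_*>0$ of the dispersion relation into the claimed blow-up $\rho_1(t)\to\infty$. Having produced $\sigma_*>0$ together with the associated radial profiles $u(r)=Q_n^{\sigma_*}(r)$, $w(r)=\widehat w_n^{\sigma_*}(r)$, I set the initial data $\rho_1(0)=\cos n\theta$, $M_1(r,0)=u(r)\cos n\theta$, $P_1(r,0)=w(r)\cos n\theta$; by construction $(e^{\sigma_* t}\rho_1(0), e^{\sigma_* t}M_1(0), e^{\sigma_* t}P_1(0))$ solves the linearized system \eqref{eqn:linear_time} identically, so $\rho_1(t)=e^{\sigma_* t}\cos n\theta\to\infty$, which is exactly the assertion. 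I would also remark that the regularity/compatibility conditions ($u,w\in C^{3+\alpha}$, the boundary conditions at $r=R$ and $r=\rho_0$) are automatically met because $u,w$ are built from modified Bessel functions and powers, smooth on $[\rho_0,R]$.

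The main obstacle I anticipate is the treatment of the parabolic term and the well-posedness of the dispersion relation: one must verify that the modified Bessel equation with the shifted parameter $(H+\sigma)/D$ still has, for every $\sigma\ge 0$, a unique solution satisfying the two-point boundary conditions with $Q_n^\sigma(r)\ge 0$ (the maximum principle argument of Section~\ref{subsec:linearized} goes through as long as $H+\sigma>0$, which holds), and that $\mathcal{F}_n(\sigma,L)$ depends continuously — and with controllable growth — on $\sigma$, so that the intermediate-value / fixed-point argument producing $\sigma_*>0$ is rigorous. A secondary technical point is making the separated-solution ansatz fully legitimate for the parabolic problem (as opposed to the elliptic reductions used for the steady-state analysis); this is standard — it amounts to noting that $e^{\sigma t}u(r)\cos n\theta$ is an exact solution of the heat-type equation when $u$ solves the correspondingly shifted Bessel equation — but it is the one place where the time-dependent nature of \eqref{eqn:linear_time}, rather than the steady problem of Section~\ref{subsec:bifur}, genuinely matters.
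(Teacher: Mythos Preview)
Your overall framework — the exponential separated ansatz $\rho_1=e^{\sigma t}\cos(n\theta)$, the shifted modified-Bessel problem for $u$ with parameter $(H+\sigma)/D$, reduction to a scalar dispersion relation, and an intermediate-value argument for a positive root — is exactly the route the paper takes. The quasi-steady detour in your first paragraph is unnecessary; once you commit to the $e^{\sigma t}$ ansatz the parabolic term is handled cleanly, as you eventually note.

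The genuine error is a sign mistake in identifying $\mathcal{F}_n(0,L)$, and it leads you to the wrong witness mode. Tracing the free-boundary equation $\sigma=-P_s''(\rho_0)-w'(\rho_0)$ and comparing with the bifurcation computation gives, at $\sigma=0$,
\[
\mathcal{F}_n(0,L)=C_1(n,\rho_0,R)-L\,C_2(n,\rho_0,R),
\]
\emph{not} $-(C_1-LC_2)$ up to a positive factor. Consequently, for $n=0,1$ (where $C_1=0$ since $n^3-n=0$) one has $\mathcal{F}_n(0,L)=-L\,C_2\le 0$, and the intermediate-value argument yields no positive root; these modes are \emph{not} the instability witness (the paper records exactly this in its Remark). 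Your own large-$n$ suggestion is actually the right one, but note it is internally inconsistent with your stated sign: you correctly argue that $C_1-LC_2>0$ for $n$ large (since $L_n=C_1/C_2$ increases without bound by the monotonicity of $C_1$ and Lemma~\ref{lemma:bifur_mono}), and this is precisely what makes $\mathcal{F}_n(0,L)>0$, not $<0$. Fix the sign, take $n^*\ge 2$ with $L<L_{n^*}$, and combine $\mathcal{F}_{n^*}(0,L)>0$ with $h(\sigma):=\mathcal{F}_{n^*}(\sigma,L)-\sigma\to-\infty$ as $\sigma\to\infty$ to produce the positive root; the rest of your Step~3 then goes through verbatim.
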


\begin{proof}
	We consider the solution $(\rho_1(t),M_1(r,t),P_1(r,t))$ with the following form
	\begin{equation}
	\left\{
	\begin{aligned}
	&\rho_1(t) = e^{at}\cos(n\theta),\\
	&M_1(r,t) = e^{at}u(r)\cos(n\theta),\\
	&P_1(r,t) = e^{at}w(r)\cos(n\theta).\\
	\end{aligned}
	\right.
	\label{eqn:unstable_perturb}
	\end{equation}
	Then the linearized system (\ref{eqn:linear_time}) is written as
	\begin{equation}
	\left\{
	\begin{aligned}
	&au(r)-D(\Delta u(r)-\frac{n^2}{r^2}u(r)) = -Hu(r)\\
	&\frac{\partial u}{\partial r}(R) = -u(R)\\
	&u(\rho_0) = - M'_{s}(\rho_0)\\
	&\Delta w(r)-\frac{n^2}{r^2}w(r) = -Lu(r)\\
	&w(\rho_0) =\frac{\gamma}{\rho_0^2}(n^2-1)\\
	&\frac{\partial w}{\partial r}(R)  = 0\\
	&a= -P''_{s}- w_{r}(\rho_0)
	\end{aligned}
	\right.
	\label{eqn:stability}
	\end{equation}
	
	By repeating the process in Section~\ref{subsec:linearized} (from (\ref{eqn:Qn}) to (\ref{eqn:C4})), we conclude that $a$  satisfies the following equation
	\begin{equation}
	\begin{aligned}
	a &= L - T(L)  + \frac{LD}{H+a}u_r(\rho_0)\\
	&+\frac{DL}{H+a}\frac{2R^{n+1}\rho_0^{n}u(R)+n(R^{2n}-\rho_0^{2n})u(\rho_0)}{\rho_0(\rho_0^{2n}+R^{2n})},
	\end{aligned}
	\end{equation}	where\begin{equation*}
	u(r) =-M'_{s}(\rho_0) (\hat{C}_1I_n(\hat{z}_r)+\hat{C}_2K_n(\hat{z}_r)),~ \hat{z}_r = \sqrt{\frac{H+a}{D}r},
	\end{equation*}
	\begin{equation*}
	\left\{
	\begin{aligned}
	&\hat{C}_1 =\frac{1}{I_n(\hat{z}_\rho)+KK_n(\hat{z}_\rho)}, \\
	&\hat{C}_2= \frac{K}{I_n(\hat{z}_\rho)+KK_n(\hat{z}_\rho)},\\
	\end{aligned}
	\right.
	\end{equation*}
	and
	$$
	K = -\frac{I_n(\hat{z}_R)+I'_n(\hat{z}_R)}{K_n(\hat{z}_R)+K'_n(\hat{z}_R)}.
	$$
	We consider a nonlinear function $h(a,n,L)$ defined as
	\begin{equation*}
	\begin{aligned}
	h(a,n,L)=&L - T(L)  -a + \frac{LD}{H+a}u_r(\rho_0)\\
	&+\frac{DL}{H+a}\frac{2R^{n+1}\rho_0^{n}u(R)+n(R^{2n}-\rho_0^{2n})u(\rho_0)}{\rho_0(\rho_0^{2n}+R^{2n})}.
	\end{aligned}
	\end{equation*}
	For $n\ge 0$, we have
	$$
	h(\infty,n,L)  \to -\infty\hbox{~and~}	h(0,n,L) = C_1(n,\rho_0,R) - C_2(n,\rho_0,R) L.
	$$
	According to (\ref{eqn:bifur}), $L_n$ is monotonically increasing with respect to $n$. Thus for any given $L$, there exists $n^*$ such that  $L < L_n*$ which implies that $h(0,n^*,L) = C_1 - C_2 L>  C_1 - C_2 L_{n^*}=0$. Therefore there must be at least one positive root of $h(a,n^*,L) = 0$. By \eqref{eqn:unstable_perturb}, we have $\rho_1\to\infty$.
	
\end{proof}

\begin{remark}
	In the proof, we have $n^*\ge2$. In fact, when $n = 0,1$, for $L \ge 0$, we have
	$$
	h(0,n,L) = -C_2(n,\rho_0,R)L \le 0.
	$$
	Moreover, when $\rho_0$ is in a neighborhood of $R$, say $\rho_0 = R - \varepsilon$, we expand $h(a,n,L)$ at $r=R$:
	\begin{equation*}
	h(a,n,L)= -a + L - T(L)-\varepsilon Lu(R) + \mathcal{O}(\varepsilon^2)
	\end{equation*}
	which decreases with respect to $a$ when $\varepsilon$ small. Thus $h(a,n,L) = 0$ does not have a positive solution for $n=0,1$ and fixed $L \ge 0$.
	
\end{remark}

\section{Numerical Results}	\label{sec:numerical}	
In this section, we employ numerical simulations to verify our theoretical results. Since the Laplacian operator in the 2D polar coordinate is defined as
$$
\Delta = \frac{\partial^2}{\partial r^2} + \frac{1}{r}\frac{\partial}{\partial r} + \frac{1}{r^2}\frac{\partial^2}{\partial \theta^2},
$$
we use the uniform grid points on the $\theta$ direction with a stepsize $\Delta\theta=\frac{2\pi}{m}$, namely, $\theta_j=j\Delta\theta$, $j=0,\dots,m-1$ where $m$ is the number of grid points on the $\theta$ direction. For the radius on each $\theta_j$ direction, we use the uniform grid points with a stepsize $h_j$, namely, $r_{i,j}=\rho_j+ih_j$ with $h_j=\frac{R-\rho_j}{n}$ and $i=0,1,\dots,n$, where $n$ is the number of grid points on each radius. Then we use the central difference scheme to approximate $\frac{\partial^2}{\partial r^2}G(r_{i,j},\theta_j)$ and $\frac{1}{r}\frac{\partial}{\partial r}G(r_{i,j},\theta_j)$, namely,
\begin{equation*}
\frac{\partial^2}{\partial r^2}G(r_{i,j},\theta_j)=\frac{G(r_{i+1,j},\theta_j)+G(r_{i-1,j},\theta_j)-2G(r_{i,j},\theta_j)}{h_j^2}
\end{equation*}
and
\begin{equation*}
\frac{\partial}{\partial r}G(r_{i,j},\theta_j)=\frac{G(r_{i+1,j},\theta_j)-G(r_{i-1,j},\theta_j)}{2h_j}.
\end{equation*}
Moreover, we use nine points to approximate  $\frac{\partial^2}{\partial \theta^2}$ such that the scheme has the second-order accuracy even for the non-radially symmetric case. The scheme is derived based on the Taylor expansion and shown in (\ref{ddtheta}) in the Appendix. This numerical scheme will recover the central difference scheme when the system reduces to the radially symmetric case.

\subsection{Convergence Test}
First we perform a  convergence order test of our numerical scheme for the radially symmetric steady-state solution which has analytic formulas shown in (\ref{sol:M_S}) and (\ref{sol:P_S}). The numerical error is defined as $Err(h,\Delta \theta)=\|(M_h,P_h)-(M_S,P_S)\|_\infty$ where $(M_h,P_h)$ is the numerical solution and $(M_S,P_S)$ is the analytic solution.
Here we choose $L = H=3$, $D=1$, $\gamma = 2$, $R = 2$ and $\rho=1.6$ and show the numerical error in Table \ref{tab:convergence} which demonstrates the second order of convergence. 	

	\begin{table}[ht!]
		    \centering
		\caption{Numerical errors and the convergence order for different grid points.  }
		\label{tab:convergence}
			\begin{tabular}{|c|c|c|}\hline
				$(h,\Delta \theta)$ & $Err(h,\Delta \theta)$ & order of convergence \\
				\hline
				$ (0.2,  \frac{\pi}{10})$	& 0.0192 & - \\
				\hline
				$ (0.1,  \frac{\pi}{20})$& 0.0044 & 2.1299\\
				\hline
				$ (0.05,  \frac{\pi}{40})$	& 0.0011 & 2.0303\\
				\hline
				$ (0.025,  \frac{\pi}{80})$&2.6668e-04 & 2.0075\\
				\hline
			\end{tabular}

	\end{table}

%
Next, we test the convergence of the numerical scheme on computing bifurcation points. Numerically we use the adaptive homotopy method \cite{hao2020adaptive} to compute bifurcation points. More specifically,  starting with a radially symmetric steady-state solution, we track along the radially symmetric solution path to $L$ and monitor the smallest eigenvalue of the nonlinear system. When the norm of the smallest eigenvalue is less than a  tolerance, e.g, $10^{-4}$ in our simulation, we obtain a numerical bifurcation point denote as $\tilde{L}$. The theoretical value of bifurcation point, $L_n$, is computed by \eqref{eqn:bifur} for any given  $n$.  Then we compute the numerical error of bifurcation points for $n = 2,3,4$ with different stepsize $h$ and $\Delta \theta$ shown in Table \ref{tab:accuracy}. It is clearly shown that the numerical error gets smaller when the stepsize gets smaller which demonstrates the convergence.

\begin{table}[ht]
	\centering
	\caption{The numerical error of bifurcation points $|\tilde{L}-L_n|$ for different $n$ and stepsize. }
	\label{tab:accuracy}
		\begin{tabular}{|c|c|c|c|}\hline
			$(h,\Delta \theta)$&$n=2$ & $n=3$ & $n=4$ \\
			\hline
			$ (0.2,  \frac{\pi}{10})$ & 0.7005 & 3.3115 & 1.9387 \\
			\hline
			$ (0.1,  \frac{\pi}{20})$& 0.1465 & 0.6722 & 0.1152 \\
			\hline
			$ (0.05,  \frac{\pi}{40})$& 0.0355 & 0.1586 & 0.0541  \\
			\hline
			$ (0.025,  \frac{\pi}{80})$&0.0106 & 0.0305 & 0.0122 \\
			\hline
		\end{tabular}
\end{table}
\subsection{The bifurcation structure and non-radially symmetric solutions}

We numerically explore the local bifurcation structure and non-radially symmetric steady-state solutions by using the tangent cone algorithm \cite{hao2020adaptive}.
The local bifurcation structure is shown in Fig.~\ref{fig:b2} for $n=2,3,4$. The $y$ axis is a projection function defined as  $\mathcal{P}(\rho(\theta))=(\rho_{max}-\rho_{min})(\theta_{max}-\theta_{min})$ for any given $\rho(\theta)$ which quantifies the change of the free boundary. For the radially symmetric branch, we have $\mathcal{P}(\rho(\theta))=0$; for the non-radially symmetric branch,  we have different local structures shown in Fig.~\ref{fig:b2} for different $n$. Moreover, the non-radially symmetric solutions in Fig.~\ref{fig:b2} are consistent with the perturbation $ \cos(n\theta)$ in section \ref{subsec:linearized}. The color of non-radially symmetric solutions stands for the value of $M$ in the domain.

\begin{figure}[htbp]
	\centering
	\includegraphics[width=0.5\linewidth]{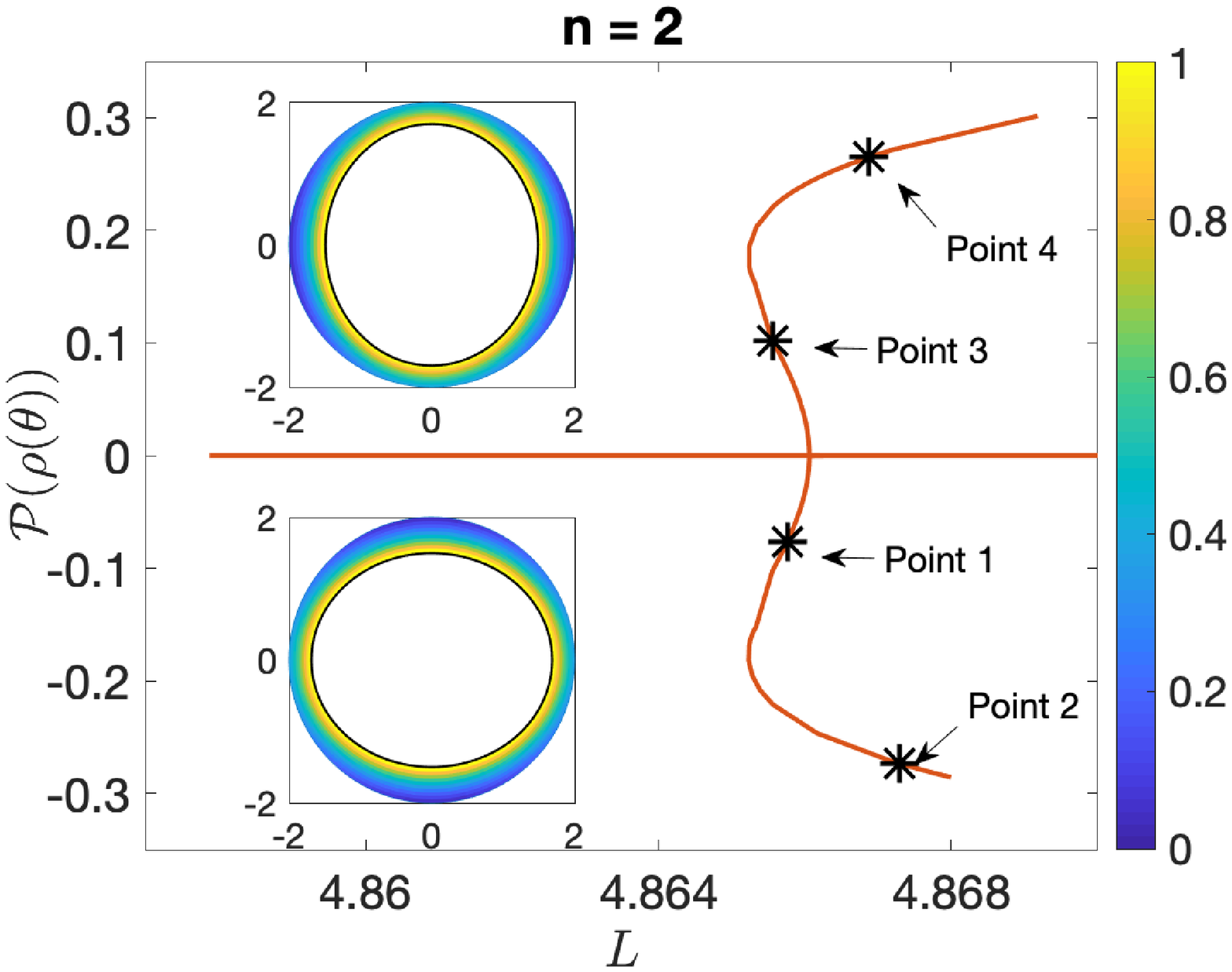}
	\includegraphics[width=0.5\linewidth]{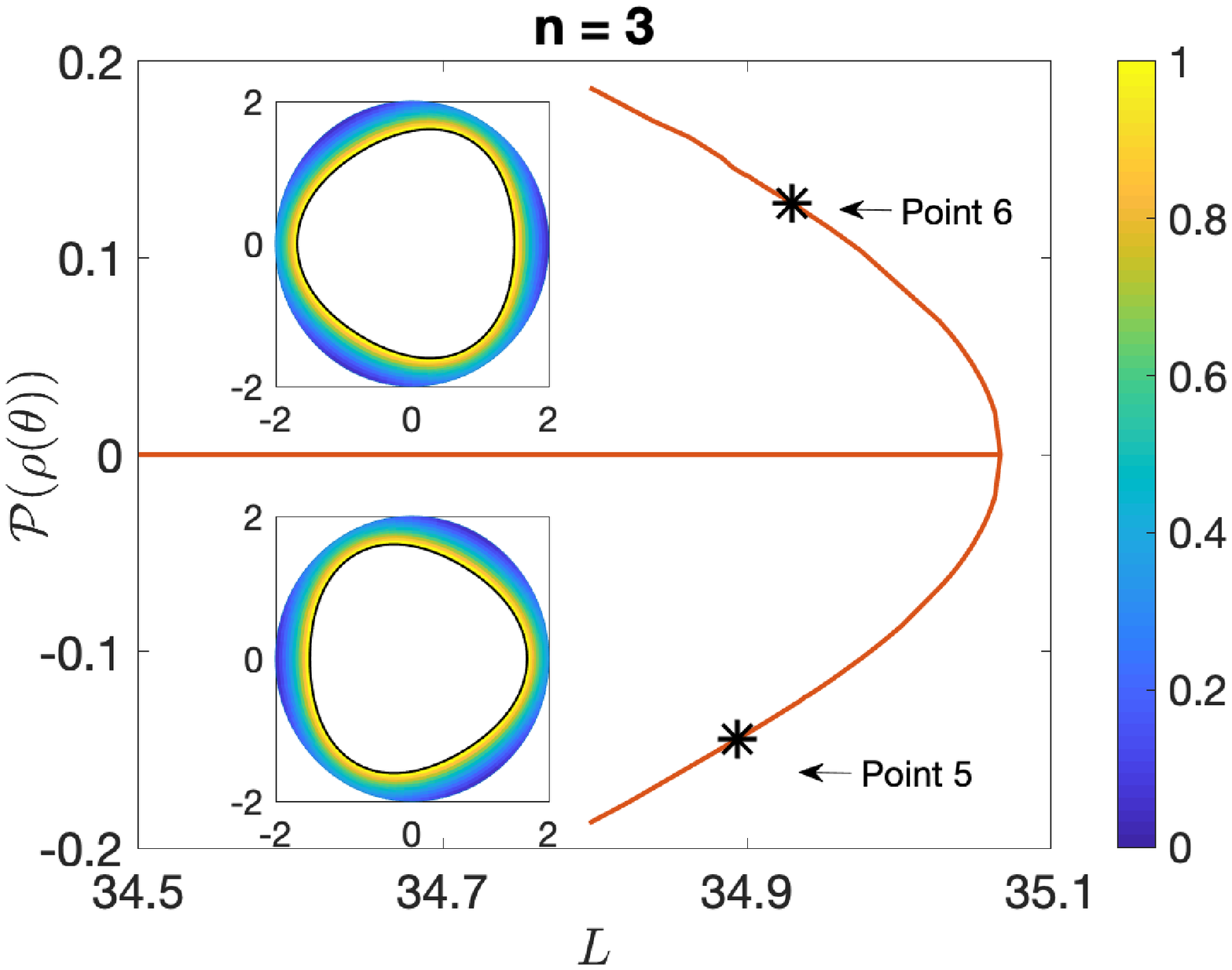}
	\includegraphics[width=0.5\linewidth]{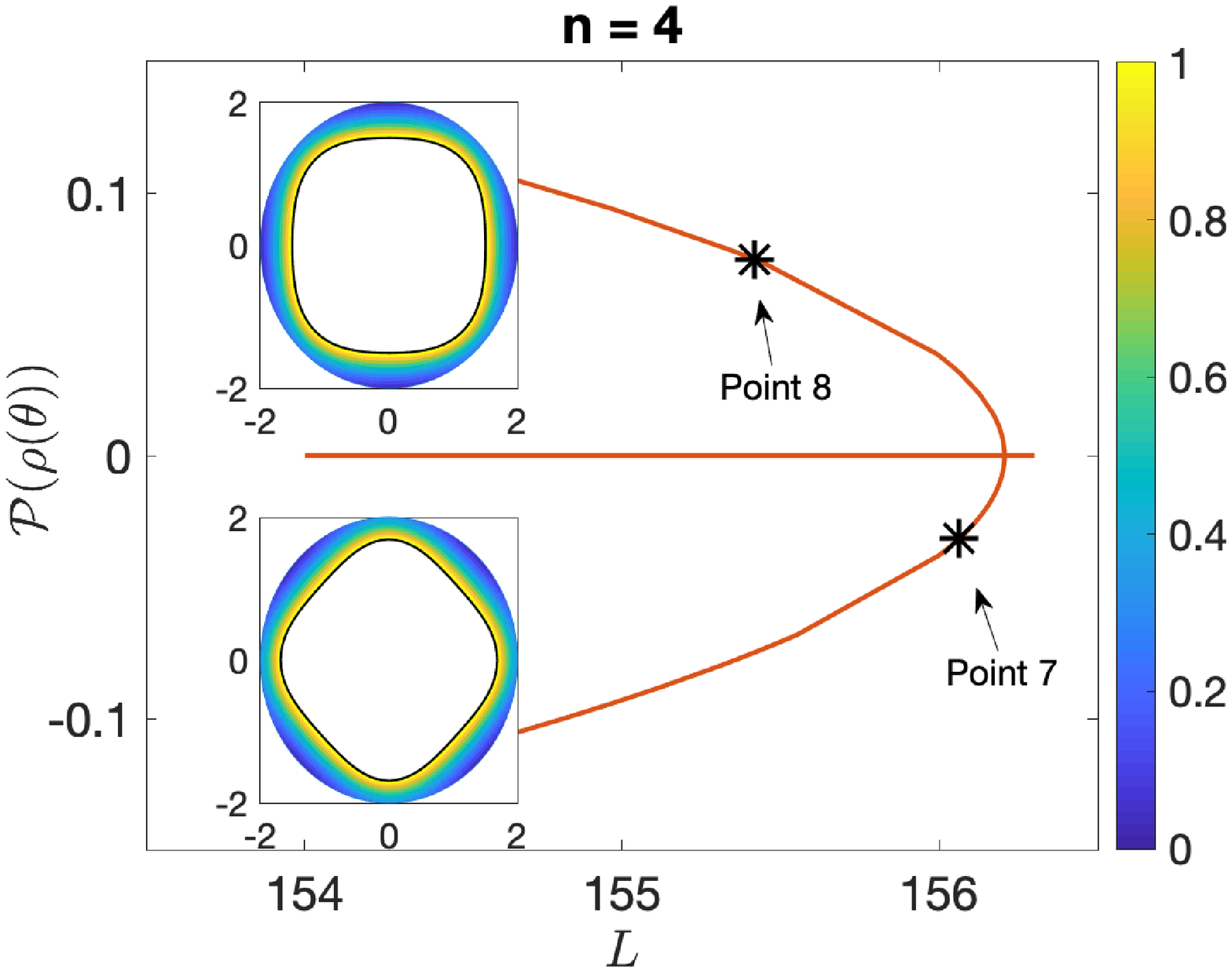}
	\caption{The bifurcation structure and non-radially symmetric steady-state solutions for $n=2,3,4$.}
	\label{fig:b2}
\end{figure}

\subsection{Linear Stability}
First, we verify the conclusion of Theorem~\ref{Thm:stability} via checking the linear stability of radially symmetric solutions for different values of $L$. More specifically, we check the real part of the largest eigenvalue, $real(\lambda_{max})$, and list in Table~\ref{tab:different_L} for different $L$. It shows that all the radially symmetric solutions are linearly unstable due to the positive largest eigenvalue.	
\begin{table}[ht]
	\centering
	\caption{\label{tab:different_L}The real of the largest eigenvalues of radially symmetric solutions v.s. $L$.  }
		\begin{tabular}{|c|c|}\hline
			$L$ & $real(\lambda_{max})$\\
			\hline
			$L = 0$ & 1.7050$\times 10^{4}$ \\
			\hline
			$L = 3$ & 1.7082 $\times 10^{4}$\\
			\hline
			$L = 15$ & 1.7085 $\times 10^{4}$\\
			\hline
			$L = 120$  & 1.7108 $\times 10^{4}$\\
			\hline
			$L = 200$  & 1.7126$\times 10^{4}$\\
			\hline
		\end{tabular}
\end{table}

Second, we check the linear stability of radially symmetric solution with radially symmetric perturbations which is the case of $n=0$ in {\bf Remark 1}. It can be seen from Table~\ref{tab:symm_L} that, under radially symmetric perturbations, the radially symmetric solutions are linearly stable when $L$ is small which is consistent with {\bf Remark 1}. As $L$ becomes large, there could be unstable coupled perturbations that can not be written in the form of separation of variables in (\ref{eqn:unstable_perturb}). Therefore, for large $L$, radially symmetric solutions become unstable even with radially symmetric perturbations shown in Table~\ref{tab:symm_L}.
\begin{table}[ht!]
	\centering
	\caption{\label{tab:symm_L}The real part of the largest eigenvalues of radially symmetric solutions v.s. $L$ under radially symmetric perturbations.  }
		\begin{tabular}{|c|c|}\hline
			$L$ & $real(\lambda_{max})$\\
			\hline
			5 & -1.7489\\
			\hline
			10 & -4.1792\\
			\hline
			50 & -7.3511 \\
			\hline
			100 & -3.5551\\
			\hline
			140 & -0.3859\\
			\hline
			150 & 0.4233\\
			\hline
			200 & 4.5619\\
			\hline
		\end{tabular}
\end{table}

Last, we test the linear stability on the non-radially symmetric branches by choosing generic points on each branch (see points in Fig.~\ref{fig:b2}). All the non-radially symmetric branches are linearly unstable since there exist positive real eigenvalues shown in Table \ref{tab:non-radial}.
\begin{table}[htbp]
	\centering
	\caption{\label{tab:non-radial}The real part of the largest eigenvalues for non-radially symmetric solutions shown in Fig.~\ref{fig:b2}.}
		\begin{tabular}{|c|c|}\hline
			Point & $real(\lambda_{max})$\\
			\hline
			Point 1 & 2.1998$\times 10^{4}$  \\\hline
			Point 2 & 2.3076$\times 10^{4}$\\\hline
			Point 3 & 1.9630$\times 10^{4}$  \\\hline
			Point 4 & 2.2456$\times 10^{4}$  \\\hline
			Point 5 & 2.1340$\times 10^{4}$ \\\hline
			Point 6 & 2.8766$\times 10^{4}$\\\hline
			Point 7 & 2.3730$\times 10^{4}$ \\\hline
			Point 8 & 2.0012$\times 10^{4}$ \\\hline
		\end{tabular}
\end{table}

\section{Conclusion}
In this paper, we develop a mathematical model of plaque formation in early atherosclerosis and describe the geometry change of plaque growth by a free boundary. Then the foam cells, the pressure, and the velocity of plaque moving satisfy a coupled system of PDEs in the intima region $\Omega(t)$. The LDL and HDL cholesterols are two parameters in the free boundary model. Then we solve the free boundary problem analytically in the radially symmetric case and obtain the explicit radially symmetric steady-state solutions for any given parameters. We also analyze the bifurcation points of the free boundary problem and show that there exists a sequence of bifurcations to the cholesterol ratio. Moreover, we also prove that radially symmetric solutions are linearly unstable with arbitrary perturbations but linearly stable with radially symmetric perturbation when the cholesterol ratio is not large. We also verify all the theoretical results by conducting numerical simulations. Furthermore, numerical simulations provide local solution structures near each bifurcation point and non-radially symmetric steady-state solutions which are linearly unstable.  This paper shows why the cholesterol ratio is important in the mathematical context and provides an insight into why plaque patterns also contribute to the rupture. More specifically, if the plaque pattern is radially symmetric, then in most cases, the plaque stays stable\cite{pmid20627248} since the perturbation in the artery is symmetric due to the blood pressure\cite{pmid26468262}. However, if the plaque pattern is irregular, even for a small cholesterol ratio, the plaque goes unstable\cite{pmid7980706}.

\section*{acknowledgments}
	This work is supported by the American Heart Association (Grant 17SDG33660722) and the National
	Science Foundation (Grant DMS-1818769).

\section*{Data Availability Statement}
The data that supports the findings of this study are available within the article [and its supplementary material].

\appendix	
\section{A numerical scheme to approximate $\frac{\partial^2 G}{\partial \theta^2}$}
We use the following finite difference scheme to approximate $\frac{\partial^2 G}{\partial \theta^2}$:
\begin{equation}
\label{ddtheta}
\begin{aligned}
\frac{\partial^2G}{\partial \theta^2}(r_{i,j},\theta_j)  &= a_1 G(r_{i,j},\theta_j) + a_2 G(r_{i+1,j},\theta_j) + a_3 G(r_{i-1,j},\theta_j) \\
&+a_4 G(r_{i,j+1},\theta_{j+1}) + a_5 G(r_{i+1,j+1},\theta_{j+1}) \\
&+ a_6 G(r_{i-1,j+1},\theta_{j+1}) +a_7 G(r_{i,j-1},\theta_{j-1})\\
& + a_8 G(r_{i+1,j-1},\theta_{j-1})+ a_9 G(r_{i-1,j-1},\theta_{j-1}), \\
\end{aligned}
\end{equation}
where
\begin{equation*}
\left\{
\begin{aligned}
a_2 &= \frac{h_{j+1}^2(r_{i,j}-r_{i,j+1})+h_{j-1}^2(r_{i,j}-r_{i,j-1})}{3h_j^2(r_{i,j+1}-2r_{i,j}+r_{i,j-1})}\\
&+ \frac{(r_{i,j+1}-r_{i,j})^3+(r_{i,j-1}-r_{i,j})^3}{3h_j^2(r_{i,j+1}-2r_{i,j}+r_{i,j-1})}\\
a_1 &= -2 - 2a_2,\ a_3 = a_2,\\
a_5 &= -\frac{h_{j+1}^2(r_{i,j}-r_{i,j+1})  + h_{j-1}^2(r_{i,j}-r_{i,j-1})}{6h_{j+1}^2(r_{i,j+1}-2r_{i,j}+r_{i,j-1})}\\
&-\frac{(2r_{i,j}-r_{i,j+1}-r_{i,j-1})^2(r_{i,j}-2r_{i,j+1}+r_{i,j-1})}{6h_{j+1}^2(r_{i,j+1}-2r_{i,j}+r_{i,j-1})} ,\\
&-\frac{3h_{j+1}(r_{i,j}-r_{i,j+1})(2r_{i,j}-r_{i,j+1}-r_{i,j-1})}{6h_{j+1}^2(r_{i,j+1}-2r_{i,j}+r_{i,j-1})}\\
a_6 &= -\frac{h_{j+1}^2(r_{i,j}-r_{i,j+1})  + h_{j-1}^2(r_{i,j}-r_{i,j-1})}{6h_{j+1}^2(r_{i,j+1}-2r_{i,j}+r_{i,j-1})}\\
&-\frac{(2r_{i,j}-r_{i,j+1}-r_{i,j-1})^2(r_{i,j}-2r_{i,j+1}+r_{i,j-1})}{6h_{j+1}^2(r_{i,j+1}-2r_{i,j}+r_{i,j-1})} ,\\
&+\frac{3h_{j+1}(r_{i,j}-r_{i,j+1})(2r_{i,j}-r_{i,j+1}-r_{i,j-1})}{6h_{j+1}^2(r_{i,j+1}-2r_{i,j}+r_{i,j-1})}\\
a_4 & = 1-a_5-a_6,\\
a_8 &= -\frac{h_{j+1}^2(r_{i,j}-r_{i,j+1})+h_{j-1}^2(r_{i,j}-r_{i,j-1})}{6h_{j-1}^2(r_{i,j+1}-2r_{i,j}+r_{i,j-1})}\\
&-\frac{(2r_{i,j}-r_{i,j+1}-r_{i,j-1})^2(r_{i,j}-2r_{i,j-1}+r_{i,j+1})}{6h_{j-1}^2(r_{i,j+1}-2r_{i,j}+r_{i,j-1})} ,\\
& -\frac{3h_{j-1}(r_{i,j}-r_{i,j-1})(2r_{i,j}-r_{i,j+1}-r_{i,j-1})}{6h_{j-1}^2(r_{i,j+1}-2r_{i,j}+r_{i,j-1})}\\
a_9 &= -\frac{h_{j+1}^2(r_{i,j}-r_{i,j+1})+h_{j-1}^2(r_{i,j}-r_{i,j-1})}{6h_{j-1}^2(r_{i,j+1}-2r_{i,j}+r_{i,j-1})}\\
&-\frac{(2r_{i,j}-r_{i,j+1}-r_{i,j-1})^2(r_{i,j}-2r_{i,j-1}+r_{i,j+1})}{6h_{j-1}^2(r_{i,j+1}-2r_{i,j}+r_{i,j-1})} ,\\
& +\frac{3h_{j-1}(r_{i,j}-r_{i,j-1})(2r_{i,j}-r_{i,j+1}-r_{i,j-1})}{6h_{j-1}^2(r_{i,j+1}-2r_{i,j}+r_{i,j-1})}\\
a_7 & = 1-a_8-a_9.
\end{aligned}
\right.
\end{equation*}

\bibliographystyle{siam}  

\bibliography{HDL}

\end{document}